\documentclass{amsart}
\usepackage[margin=1in]{geometry}

\usepackage{tikz}
\usepackage{mathdots}
\usepackage{array}
\usepackage{multirow}
\usepackage{hyperref}
\usepackage{float}
\usepackage{caption}
\usepackage{amscd}
\usepackage{tabularx}
\usepackage{extarrows}
\usepackage{ytableau}
\usepackage{wasysym}

\usepackage{amsmath}
\usepackage{amsthm}
\usepackage{amssymb}
\usepackage{mathtools}
\usepackage{soul}
\usepackage{color}

\newtheorem{theorem}{Theorem}[section]
\newtheorem{conjecture}[theorem]{Conjecture}
\newtheorem{proposition}[theorem]{Proposition}
\newtheorem{corollary}[theorem]{Corollary}
\newtheorem{lemma}[theorem]{Lemma}
\theoremstyle{definition}
\newtheorem{definition}[theorem]{Definition}
\newtheorem{example}[theorem]{Example}

\newcommand{\N}{\mathbb{N}}
\newcommand{\Z}{\mathbb{Z}}

\DeclareMathOperator{\rr}{\mathfrak{r}}

\title{Counting and Constructing Latin tableaux}

\author[S. Daugherty]{Spencer Daugherty}\address{Department of Mathematics, University of Colorado Boulder, Boulder, CO}\email{spencer.daugherty@colorado.edu}

\author[M. Fahnestock]{Michaela Fahnestock}\address{Department of Mathematics, University of Colorado Boulder, Boulder, CO}\email{michaela.fahnestock@colorado.edu}

\author[S. Huxtable]{Shawn Huxtable}\address{Department of Mathematics, University of Colorado Boulder, Boulder, CO}\email{amhu7380@colorado.edu}

\author[M. Larson]{Michael Larson}\address{Department of Mathematics, University of Colorado Boulder, Boulder, CO}\email{michael.larson@colorado.edu}

\author[N. Al Munayes]{Nourah Al Munayes}\address{Department of Mathematics, University of Colorado Boulder, Boulder, CO}\email{nourah.almunayesjr@colorado.edu}

\author[S. Peacock]{Storey Peacock}\address{Department of Mathematics, University of Colorado Boulder, Boulder, CO}\email{storey.peacock@colorado.edu}

\usepackage[backend=bibtex, maxbibnames=10, doi=false, url=false, isbn=false]{biblatex}
\begin{document}

 \begin{abstract} Latin tableaux generalize Latin squares to Young diagrams and have interesting connections to linear algebra and Rota's basis conjecture. Conjecturally, a partition admits Latin tableaux if and only if it belongs to a special class called wide partitions. 
We prove that various classes of wide partitions admit Latin tableaux. We extend these classes further by showing that the number of Latin tableaux is weakly increasing over certain operations on wide partitions. 
Additionally, we establish an explicit formula for the number of Latin tableaux using (0,1)-matrices and matrix permanents. 
 \end{abstract}

\maketitle

\section{Introduction}

An $n \times n$ \emph{Latin square} is an $n \times n$ grid filled with $n$ different symbols such that each symbol appears exactly once in each row and once in each column. Latin squares were first explored by Choi Seok-jeong \cite{ree2014choi} and famously studied by Leonhard Euler \cite{Euler1782}, both in the 1700s. Unless otherwise specified, we consider the $n$ symbols in a Latin square to be $\{1,2,\ldots, n\}$, a set we denote by $[n]$ throughout. 

\begin{example}\label{ex:LS} The 12 distinct Latin squares of size 3 are: \vspace{2mm}
    \[\begin{ytableau}
    1 & 2 & 3 \\
    2 & 3 & 1 \\
    3 & 1 & 2
       \end{ytableau} \qquad 
       \begin{ytableau}
           1 & 2 & 3 \\
           3 & 1 & 2\\
           2 & 3 & 1
       \end{ytableau} \qquad 
     \begin{ytableau}
    1 & 3 & 2 \\
    2 & 1 & 3 \\
    3 & 2 & 1
       \end{ytableau} \qquad 
       \begin{ytableau}
           1 & 3 & 2 \\
           3 & 2 & 1\\
           2 & 1 & 3
       \end{ytableau} \qquad \begin{ytableau}
    2 & 1 & 3 \\
    1 & 3 & 2 \\
    3 & 2 & 1
       \end{ytableau} \qquad 
       \begin{ytableau}
           2 & 1 & 3 \\
           3 & 2 & 1\\
           1 & 3 & 2
       \end{ytableau} 
       \] \vspace{2mm}
       \[
\begin{ytableau}
    2 & 3 & 1 \\
    1 & 2 & 3 \\
    3 & 1 & 2
       \end{ytableau} \qquad 
       \begin{ytableau}
           2 & 3 & 1 \\
           3 & 1 & 2\\
           1 & 2 & 3
       \end{ytableau} \qquad 
       \begin{ytableau}
    3 & 1 & 2 \\
    1 & 2 & 3 \\
    2 & 3 & 1
       \end{ytableau} \qquad 
       \begin{ytableau}
           3 & 1 & 2 \\
           2 & 3 & 1\\
           1 & 2 & 3
       \end{ytableau} \qquad
       \begin{ytableau}
    3 & 2 & 1 \\
    1 & 3 & 2 \\
    2 & 1 & 3
       \end{ytableau} \qquad 
       \begin{ytableau}
           3 & 2 & 1 \\
           2 & 1 & 3\\
           1 & 3 & 2
       \end{ytableau} 
       \] \vspace{-2mm}
\end{example}

Latin squares have been a major topic of study in discrete math over the last 300 years. This includes applications in experimental design, error-correcting codes, cryptography, and algebra, where they are the multiplication tables of quasigroups \cite{keedwell2015latin}. A major problem historically has been enumerating Latin squares - all existing formulas for counting Latin squares are computationally prohibitive, and values are only known up to $n=11$ (see OEIS \href{https://oeis.org/A002860}{A002860}). 

A natural way to generalize Latin squares is to extend the concept to all Young diagrams.  A \emph{Latin tableau} of shape $\lambda$ and type $\lambda'$ is a diagram of $\lambda$ filled with integers such that row $i$ contains exactly the entries $1$ through $\lambda_i$ and no entry repeats within a column. These were first introduced in \cite{chow2003wide} alongside the question: which partition shapes admit Latin tableaux? This question translates nicely to graph theory as a list-edge-coloring problem. For a partition $\lambda$, let $G(\lambda)$ be the bipartite graph whose vertices represent the rows and columns of $\lambda$ and whose edges represent the cells of $\lambda$. The Latin tableaux of shape $\lambda$ correspond exactly to the proper list-edge-colorings of $G(\lambda)$ where the list at cell edge $(i,j)$ is $\{1,2, \ldots, \lambda_i\}$ where the color on each edge corresponds to the symbol in the appropriate cell of $\lambda$. When $\lambda$ is rectangular, this reduces to the Dinitz problem, which is solved by Galvin \cite{GALVIN1995153}.

Chow-Fan-Goemans-Vondrak conjecture that the class of partitions which admit Latin tableaux (which they call Latin partitions) is exactly the class of partitions they refer to as \emph{wide}. A partition $\lambda$ is wide if every subpartition dominates its conjugate. This claim is known as the wide partition conjecture on free matroids.

\begin{conjecture}[WPC on free matroids]\label{conj:wpcfm}
A partition $\lambda$ is Latin if and only if it is wide.
\end{conjecture}

The WPC for free matroids is a special case of a more general conjecture, the wide partition conjecture, which covers all matroids. The WPC was developed by Timothy Chow and Brian Taylor as a generalization of a famous conjecture of Rota from 1989 \cite{huang1994relations}. This conjecture is still unsolved despite significant attention. We state the conjecture here both in terms of vector spaces and in terms of matroids.

\begin{conjecture}[Rota's Basis Conjecture] \label{conj:rota}
    Let $\mathbb{F}$ be a field and let $B_1, \ldots, B_n \subseteq \mathbb{F}^n$ be bases of the vector space $\mathbb{F}^n$. Then the multiset $B_1 \cup \cdots \cup B_n$ can be partitioned into $n$ bases of $\mathbb{F}^n$, which are transversal with respect to the original bases $B_1, \ldots, B_n$. Equivalently, given $n$ disjoint bases $B_1, \ldots, B_n$ in a rank $n$ matroid $M$, there exist $n$ disjoint transversals $A_1, \ldots, A_n$ of $B_1, \ldots, B_n$ that are all bases of $M$. 
\end{conjecture}

Conjecture \ref{conj:rota} is known to be true for paving matroids \cite{geelen2006rota}, strongly base orderable matroids \cite{wild1994rota}, random bases of vector spaces \cite{sauermann2022rotasbasisconjectureholds}, and asymptotically \cite{pokrovskiy2020rotasbasisconjectureholds}.
It is also true over fields of characteristic zero if $n-1$ or $n+1$ is a prime number. This follows from observations in \cite{huang1994relations} and the proofs of the Alon-Tarsi conjecture \cite{alon1992colorings} for these cases by Drisko \cite{drisko1997number} and Glynn \cite{glynn2010conjectures}.  This connects back to Latin squares, as the Alon-Tarsi conjecture claims that the number of even Latin squares of order $n$ is not equal to the number of odd Latin squares of order $n$, where parity is determined by the product of the signs of the permutations that make up its rows. 

The backward direction of the WPC for free matroids (Latin $\Rightarrow$ wide) is known to be true by an argument of Vic Reiner:  Let $T$ be a Latin tableau of shape $\lambda$ and type $\lambda'$, and note that for any $\mu \subseteq \lambda$, $T$ restricts to $\mu$ as a Latin tableau $S$ of shape $\mu$ and content $\mu'$. To show that $\mu \succeq \mu'$, fix $i$ and delete all entries of $S$ that are strictly greater than $i$ (leaving only the entries corresponding to $\mu_1', \ldots, \mu_i'$).  If we top-justify the remaining entries, they must all fit into the first $i$ rows of $S$. Thus, the $\mu_1 + \mu_2 + \cdots + \mu_i \geq \mu_1'+\mu_2' + \cdots + \mu_i'$.

The forward direction (wide $\Rightarrow$ Latin) is only proven for certain families of wide partitions, although it has been verified computationally for partitions of size $n \leq 65$. Chow-Fan-Goemans-Vondrak prove Latinness for partitions with at most two distinct part sizes, certain families with 3 or 4 distinct part sizes, and self-conjugate partitions with at most three distinct part sizes. They also reduce the conjecture to indecomposable wide partitions, and further to self-conjugate wide partitions. Allsop-Kotlar-Wanless prove that all wide partitions with exactly three distinct row lengths are Latin \cite{allsop2025outlinerectanglesallocationslatin}.

In this paper, we prove that various new classes of wide partitions are Latin. Our classes differ significantly in form from those studied in \cite{allsop2025outlinerectanglesallocationslatin, chow2003wide}, which both focus on cases with a small number of distinct parts. After providing a formula for the number of Latin tableaux, we also use enumerative methods to further extend the classes of wide partitions we know to be Latin. Specifically, we prove that certain changes to a partition always result in a weak increase in the number of Latin tableaux, thus allowing us to build on our base classes.
In Section \ref{sec:pre}, we cover background material and present a few structural results on wide partitions. In Section \ref{sec:WPC}, we prove that the following three distinct families of wide partitions are Latin:
\begin{enumerate}
    \item $\lambda= (\lambda_1, \ldots, \lambda_{\ell})$ with $\lambda_{\ell} \geq \ell$,
    \item $\lambda = (\lambda_1, \ldots, \lambda_{\ell})$ with $\ell-1$ distinct parts where $i$ is the part that appears twice, and there exists an $m \in \{m < i : m \not= \lambda_j,j\in[\ell]\}$ such that $m+i \not= \lambda_j$ for any $j \in [\ell]$, and 
    \item $\lambda = (\lambda_1, \ldots, \lambda_{\ell})$ where the largest multiplicity of any part is greater than or equal to $gcd(\lambda_1, \ldots, \lambda_{\ell})$.
\end{enumerate}

In Section \ref{sec:LT}, we give the first explicit formula for the number of Latin tableaux of shape $\lambda$ and type $\lambda'$.  When $\lambda$ is a rectangle or square, we recover classic formulas of Shao-Wei \cite{SHAO1992293}. In Section \ref{sec:mono}, we study operations on a partition $\mu$ that produce a partition $\lambda$ where $LT(\lambda) \geq LT(\mu)$. These extend the classes of wide partitions we know to be Latin. We establish that adding a box to the first row, adding a long-enough new first row, and deleting a single-box last row all weakly increase the number of Latin tableaux on the shape. 

\begin{table}[H]
    \centering
    \begin{tabular}{|c||c|c|c|c|c|c|c|c|} \hline
      & $n=21$ & $n=22$ & $n=23$ & $n=24$ & $n=25$ & $n=26$ & $n=27$ & $n=28$\\ \hline \hline
       \S \ref{sec:WPC}, \ref{sec:mono} & 139 & 167 & 194 & 239 & 278 & 335 & 389 & 468 \\ \hline
        \cite{allsop2025outlinerectanglesallocationslatin, chow2003wide}  & 146 & 176 & 207 & 251 & 295 & 352 & 415  & 490 \\ \hline
        \S \ref{sec:WPC}, \ref{sec:mono} + \cite{allsop2025outlinerectanglesallocationslatin, chow2003wide}  & 149 & 183 & 215 & 264 & 310 & 375 & 441  & 527\\ \hline
       \S \ref{sec:WPC}, \ref{sec:mono} $-$ \cite{allsop2025outlinerectanglesallocationslatin, chow2003wide}   & 3 & 7 & 8 & 13 & 15 & 23 & 26  & 37\\ \hline \hline
       Total Wide & 150 & 183 & 217 & 266 & 315 & 380 & 449  & 534\\ \hline 
    \end{tabular}
    \caption{Number of wide partitions proved to be Latin}
    \label{tab:vals}
\end{table}

Table \ref{tab:vals} contains, for $21\leq n \leq 28$, the number of wide partitions of size $n$ proved to be Latin by results in this paper and/or others. Note that we are referring to wide partitions as proved if they fall into a Latin family as given by a result, as all the partitions here are known to be Latin computationally. Row 1 counts the number of partitions proved to be Latin using results from Sections \ref{sec:WPC} and \ref{sec:mono} along with the fact that the component-wise sum of two Latin partitions is also Latin \cite[Prop 8]{chow2003wide}. Row 2 counts the number of partitions proved to be Latin using only results from \cite{allsop2025outlinerectanglesallocationslatin, chow2003wide}. Row 3 counts the number of partitions proved to be Latin using all results from both our paper and \cite{allsop2025outlinerectanglesallocationslatin, chow2003wide}. Row 4 counts the number of partitions proved to be Latin by results in Sections \ref{sec:WPC} and \ref{sec:mono} that are not known to be Latin by any results in \cite{allsop2025outlinerectanglesallocationslatin, chow2003wide}. The final row counts the number of wide partitions.
These
values are all computer-generated using Claude and SageMath. 

\begin{table}[H]
    \centering
    \begin{tabular}{|c|c|c|} \hline
    Result & First new shape & Size \\ \hline \hline
        Theorem \ref{thm:wide-strict-rep} & $(5,4,4,2,1)$ & 16 \\ \hline
         Corollary \ref{cor:addrow} & $(7,5,4,3,3)$ & 22 \\ \hline
         Theorem \ref{thm:gcd-mult} & $(8,8,6,4,4,2,2)$ & 34\\ \hline
         Theorem \ref{thm:fat} & $(9,9,8,7,6,6)$ & 45 \\ \hline
    \end{tabular}
    \caption{Smallest shapes proved Latin exclusively by given result}
    \label{tab:smallestval}
\end{table}

In the range considered in Table \ref{tab:vals}, every partition counted as new is obtained using Theorem \ref{thm:wide-strict-rep} and Corollary \ref{cor:addrow}. Theorem \ref{thm:fat} and Theorem \ref{thm:gcd-mult} start contributing at higher values of $n$. Table \ref{tab:smallestval} lists the smallest partition that is proved to be Latin exclusively by the listed result.

\subsection*{Acknowledgements} This paper began at the Summer REU/G at the University of Colorado at Boulder in 2025. The authors would like to thank Nat Thiem and the CU Boulder Department of Mathematics.

\section{Preliminaries}\label{sec:pre}

A \emph{partition} $\lambda = (\lambda_1, \ldots, \lambda_{k})$ of $n \in \mathbb{N}$ is a finite sequence of weakly decreasing positive integers $\lambda_1 \geq \lambda_2 \geq \cdots \geq \lambda_{k}$ such that $|\lambda| = \lambda_1 + \cdots + \lambda_{k} = n$.  If $|\lambda|=n$, we write $\lambda \vdash n$.  We let $\ell(\lambda)$ denote the number of parts in $\lambda$ so $\ell((\lambda_1, \ldots, \lambda_k)) = k$. The \emph{diagram} of a partition $\lambda$ is a left-aligned grid of boxes where the $i^{\text{th}}$ row from the top contains $\lambda_i$ boxes. The \emph{conjugate} of a partition $\lambda$ is defined $\lambda' = (\lambda'_1, \ldots, \lambda'_{\ell})$ where $\lambda'_i = |\{j \in [\ell] : \lambda_j \geq i\}|$. This corresponds to flipping the diagram of $\lambda$ over the diagonal that goes from the top left to the bottom right corner, thus swapping the rows and columns. The Durfee size of $\lambda$ is the number of boxes that lie on the aforementioned diagonal.

 We say that $\lambda$ \emph{dominates} $\mu$, denoted $\lambda \succeq \mu$, if $\lambda_1 + \cdots + \lambda_{i} \geq \mu_1 + \cdots \mu_{i}$ for all $i\in[\ell(\lambda)]$. We say $\mu$ is a \emph{subpartition} of $\lambda$, denoted $\mu \subseteq \lambda$, if $\mu$ can be obtained by deleting rows of $\lambda$. We say $\mu$ is a \emph{lower subparititon} of $\lambda$ if $\mu$ is obtained by deleting rows $1$ through $i$ from $\lambda$ for some $1 \leq i \leq \ell(\lambda)$.  Note that the term subpartition is also used to describe other concepts in the literature.

\begin{example} The partition $\lambda = (4,2,2)$ dominates the partition $\mu = (3,3,1)$ because $4 \geq 3$ and $ 4+2 \geq 3+3$ and $4 + 2 + 2 \geq 3 + 3 + 1$. The partition $(4,2,1)$ is a subpartition of the partition $(4,3,3,2,1)$, whereas the partition $(3,2,1)$ is a lower subpartition of $(4,3,3,2,1)$.
\end{example}

\begin{definition}
A partition $\lambda$ is \emph{wide} if $\mu \succeq \mu'$ for all $\mu \subseteq \lambda$. 
\end{definition}

The number of wide partitions is counted in OEIS \href{https://oeis.org/A070830}{A070830}, given by:
\[1, 1, 2, 3, 3, 5, 6, 9, 11, 14, 18, 23, 29, 35, 45, 56, 68, \ldots\]

\begin{example} The diagrams of the five wide partitions of size $6$ are
\[ \begin{ytableau}
    \ & \ & \ & \ & \ & \
\end{ytableau} \qquad \begin{ytableau}
    \ & \ & \ & \ & \ \\
    \
\end{ytableau} \qquad \begin{ytableau}
    \ & \ & \ & \ \\
    \ & \
\end{ytableau} \qquad \begin{ytableau}
    \ & \ & \ \\
    \ & \ & \
\end{ytableau} \qquad \begin{ytableau}
    \ & \ & \ \\
    \ & \ \\
    \
\end{ytableau}  \]
The partition $(4,1,1)$ of 6, on the other hand, is not wide because the subpartition $(1,1)$ does not dominate its conjugate, $(2)$. 
\end{example}

We now review some definitions and results on wide partitions.
For partitions $\mu$ and $\lambda$, let $ \lambda + \mu$ be the partition where $(\lambda+\mu)_i = \lambda_i + \mu_i$. A wide partition is \emph{decomposable} if it can be written as the sum of two wide partitions and \emph{indecomposable} otherwise.
The following statements are proved in \cite{chow2003wide}:
\begin{enumerate}
    \item If $\lambda$ is wide, then $\lambda_{\ell(\lambda)-i}>i$ for all $i \geq 0$.
    \item If $\lambda$ is wide and $\mu$ is a single column with height at most $\lambda_1'+1$, then $\lambda+\mu$ is wide.
    \item If $\lambda$ and $\mu$ are wide, then $\lambda+\mu$ is wide.
    \item   If $\lambda_{\ell} \geq \ell$, then $\lambda$ is wide.
\end{enumerate}

Notably, the proposition below offers a simpler condition for wideness. Moving forward, we will take the condition below as our criterion for a partition to be wide. From this definition, it is also clear that if $(\lambda_1, \lambda_2, \ldots, \lambda_k)$ is wide then $(\lambda_2, \lambda_3, \ldots, \lambda_k)$ is wide as well.

\begin{proposition}\cite{chow2003wide}
If $\lambda$ is a partition such that $\mu \succeq \mu'$ for all lower subpartitions $\mu$ of $\lambda$, then $\lambda$ is wide.
\end{proposition}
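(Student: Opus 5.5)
The plan is to fix an arbitrary subpartition $\mu\subseteq\lambda$ and an index $i$, and to find a single lower subpartition $\nu$ of $\lambda$ whose dominance inequality at $i$ forces the one for $\mu$ at $i$. The first step rewrites the dominance condition so that each side can be compared row by row.

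For any partition $\mu=(\mu_1,\ldots,\mu_k)$ we have $\mu'_c=|\{r:\mu_r\ge c\}|$, and therefore
\[\mu'_1+\cdots+\mu'_i=\sum_{r}\min(\mu_r,i).\]
Since the rows are weakly decreasing, $\mu_r-\min(\mu_r,i)=\max(\mu_r-i,0)$ for every $r$. Subtracting the terms with $r\le i$ from both sides, the inequality $\mu_1+\cdots+\mu_i\ge\mu'_1+\cdots+\mu'_i$ becomes
\[ L_i(\mu):=\sum_{r\le i}\max(\mu_r-i,0)\;\ge\;\sum_{r>i}\min(\mu_r,i)=:R_i(\mu).\]
This form has a useful property. $L_i$ is monotone increasing in the entries of the top $i$ rows. $R_i$ is monotone increasing in the multiset of rows below row $i$, both under enlarging entries and under adding rows. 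For $i\ge k$ the inequality is trivial, since $R_i(\mu)=0$. Equivalently, $\mu_1+\cdots+\mu_k=|\mu|\ge\mu'_1+\cdots+\mu'_i$. So it suffices to treat $1\le i<k$.

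Now let $\mu$ consist of the rows $\lambda_{j_1},\ldots,\lambda_{j_k}$ with $j_1<\cdots<j_k$, and fix $i<k$. Put $t=j_i$. Let $\nu$ be the lower subpartition of $\lambda$ consisting of rows $t-i+1,\ldots,\ell(\lambda)$. This is legitimate because $j_i\ge i$, so $t-i+1\ge 1$. If $t-i+1=1$, then $\nu=\lambda$ itself, which I will read as covered by the hypothesis (the case $i=0$ of "delete rows $1$ through $i$"). Otherwise $\nu$ is a genuine lower subpartition. The top $i$ rows of $\nu$ are $\nu_r=\lambda_{t-i+r}$. Because the $j$'s are strictly increasing, $j_r\le j_i-(i-r)=t-i+r$, hence $\nu_r\le\lambda_{j_r}=\mu_r$ for each $r\le i$. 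This gives $L_i(\nu)\le L_i(\mu)$. The rows of $\nu$ below its $i$th row are all the rows $\lambda_s$ with $s>t$. They contain the rows of $\mu$ below its $i$th row, namely $\lambda_{j_{i+1}},\ldots,\lambda_{j_k}$, as a sub-multiset, so $R_i(\nu)\ge R_i(\mu)$. The hypothesis $\nu\succeq\nu'$ gives $L_i(\nu)\ge R_i(\nu)$, and chaining these inequalities yields $L_i(\mu)\ge R_i(\mu)$.

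Since $i$ was arbitrary, $\mu\succeq\mu'$ for every subpartition $\mu$, so $\lambda$ is wide. The main obstacle is the choice of $\nu$, which must depend on $i$. The naive choice, namely the lower subpartition starting at row $j_1$, fails: its top rows are larger than those of $\mu$, so it pushes $L_i$ in the wrong direction. Choosing $\nu$ to start at row $j_i-i+1$ makes the top block of $\nu$ entrywise smaller and its tail larger simultaneously. The other point that needs care is the reformulation via $\min(\mu_r,i)$, which is what makes this two-sided monotonicity visible.
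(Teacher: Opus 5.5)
The paper gives no proof of this proposition; it is quoted from Chow--Fan--Goemans--Vondr\'ak, so there is no internal argument to compare against. Your proof is correct and complete.

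\begin{itemize}
\item The identity $\sum_{c\le i}\mu'_c=\sum_r\min(\mu_r,i)$ correctly turns dominance at index $i$ into $L_i(\mu)\ge R_i(\mu)$.
\item The choice $\nu=(\lambda_{t-i+1},\lambda_{t-i+2},\ldots)$ with $t=j_i$ works for the reasons you give. Since $j_r\le t-i+r$, you get $\nu_r\le\mu_r$ for $r\le i$.
\item The tail $\lambda_{t+1},\lambda_{t+2},\ldots$ of $\nu$ contains $\lambda_{j_{i+1}},\ldots,\lambda_{j_k}$.
\item Because $j_k>t$, $\nu$ has at least $i+1$ rows. So dominance of $\nu$ at index $i$ really is an instance of the hypothesis.
\end{itemize}

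A small point: the identity $\mu_r-\min(\mu_r,i)=\max(\mu_r-i,0)$ holds for any integers. The monotonicity of the rows is not what justifies it.

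Your handling of the case $\nu=\lambda$ is not optional, and you were right to flag it. The paper defines lower subpartitions by deleting rows $1$ through $i$ with $1\le i\le\ell(\lambda)$, which excludes $\lambda$ itself. Under that literal reading the proposition is false. For $\lambda=(1,1)$ the only nonempty lower subpartition is $(1)$, which dominates its conjugate. Yet $(1,1)\not\succeq(2)$, so $(1,1)$ is not wide.

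In your construction, $\nu=\lambda$ occurs exactly when $j_i=i$, that is, when the first $i$ rows of $\mu$ are $\lambda_1,\ldots,\lambda_i$. For $\mu=\lambda$ no smaller lower subpartition can stand in. So the hypothesis must include $\lambda\succeq\lambda'$, exactly as you read it.
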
 

\begin{definition} 
    A \emph{Latin tableau} of shape $\lambda$ and type $\mu$ is a diagram of $\lambda$ filled with integers such that $i$ appears exactly $\mu_i$ times and each integer appears at most once in any row or column.
\end{definition}

We are often interested in Latin tableaux of shape $\lambda$ and type $\lambda'$. In this case, each row $i$ contains exactly the entries $1$ through $\lambda_i$. A partition $\lambda$ is \emph{Latin} if there exists a Latin tableau of shape $\lambda$ and type $\lambda'$. 

\begin{example}
    The Latin tableaux of shape $(3,2)$ and type $(2,2,1)$ are:

    \[\begin{ytableau}
        1 & 2 & 3 \\
        2 & 1
    \end{ytableau}  \qquad
    \begin{ytableau}
        1 & 3 & 2 \\
        2 & 1
    \end{ytableau} \qquad
    \begin{ytableau}
        3 & 2 & 1 \\
        2 & 1
    \end{ytableau} \qquad
    \begin{ytableau}
        2 & 1 & 3 \\
        1 & 2
    \end{ytableau} \qquad
    \begin{ytableau}
        2 & 3 & 1 \\
        1 & 2
    \end{ytableau} \qquad
    \begin{ytableau}
        3 & 1 & 2 \\
        1 & 2
    \end{ytableau}\]
\end{example}

The wide partition conjecture for free matroids (Conjecture \ref{conj:wpcfm}) states that a partition $\lambda$ is Latin if and only if it is wide. Various results towards the WPC on free matroids are presented in \cite{chow2003wide}. Notably, they present an argument credited to Vic Reiner that shows any Latin partition $\lambda$ must be wide. So, it remains to show that if $\lambda$ is wide, then $\lambda$ is Latin. The authors reduce this problem by proving that:
\begin{enumerate}
    \item If all indecomposable wide partitions are Latin, then all wide partitions must be Latin.
    \item If all self-conjugate wide partitions are Latin, then all wide partitions are Latin.
\end{enumerate} Additionally, they prove the special cases:
\begin{enumerate}
    \item  If $\lambda$ is a wide partition with at most two distinct part sizes, then $\lambda$ is Latin.
    \item  If $\lambda$ is a wide partition with three distinct part sizes and either the second or third part size occurs with multiplicity one, or if $\lambda$ is a wide partition with four distinct part sizes and the second and fourth part sizes both occur with multiplicity one, then $\lambda$ is Latin.
\item If $\lambda$ is a self-conjugate wide partition with at most three distinct part sizes, then $\lambda$ is Latin.
\end{enumerate}
In \cite{allsop2025outlinerectanglesallocationslatin}, Allsop, Kotlar, and Wanless prove the stronger case:
\begin{itemize}
    \item[(4)] If $\lambda$ is a wide partition with three distinct part sizes, then $\lambda$ is Latin.
\end{itemize}

Latin tableaux are also studied in \cite{aharoni20252coverswideyoungdiagrams, chow2024Latintableauconjecture, KARPMAN2021102204}. Karpman and Roldán study isotopy graphs of Latin tableaux, which encode orbits of tableaux under a permutation group action. Chow and Tiefenbruck study the question of when Latin tableaux of shape $\lambda$ and type $\mu$ exist, which they propose an answer to with the Latin tableau conjecture. Aharoni, Berger, Guo, and Kotlar reformulate the WPC for free matroids in terms of hypergraph matching, and prove a weaker version in terms of hypergraph covers.

\subsection{Results on wide partitions}

We conclude this section by proving some additional results on wide partitions. 

\begin{proposition}
    If $\lambda$ is a wide partition of $n$, then the minimum value of $\lambda_1$ is given by the integer $j+1$ where $j^2 < n \leq (j+1)^2$. 
\end{proposition}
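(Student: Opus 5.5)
The statement has two parts. First, every wide partition of $n$ has $\lambda_1 \geq j+1$. Second, some wide partition of $n$ attains $\lambda_1 = j+1$. I will prove the lower bound from the structural fact (1) quoted from \cite{chow2003wide}, and the sharpness by an explicit construction.

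\emph{Lower bound.} Let $\lambda$ be wide with $\ell=\ell(\lambda)$. The condition $\lambda \succeq \lambda'$ at the first entry gives $\lambda_1 \geq \lambda'_1 = \ell$. Fact (1) says $\lambda_{\ell-i} > i$, so $\lambda_{\ell-i}\geq i+1$ for $0\le i\le \ell-1$; this is not needed for the bound but shows the rows are long. Since every row has length at most $\lambda_1$ and there are $\ell \leq \lambda_1$ rows,
\[ n = |\lambda| \leq \ell\,\lambda_1 \leq \lambda_1^2. \]
Since $n > j^2$, we get $\lambda_1^2 > j^2$, hence $\lambda_1 \geq j+1$.

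\emph{Sharpness.} Write $n = j^2 + r$ with $1 \leq r \leq 2j+1$. I would exhibit a wide partition with first part $j+1$ and use the criterion of fact (4): if the last part is at least the number of parts, the partition is wide. There are two cases.

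\begin{itemize}
\item If $r \leq j$, take $\lambda = ((j+1)^r, j^{\,j-r})$. This has $j$ rows, its last part is at least $j$, and its size is $r(j+1) + (j-r)j = j^2 + r$.
\item If $j < r \leq 2j+1$, take $\lambda = ((j+1)^{\,r-j}, j^{\,2j+1-r})$. This has $j+1$ rows and size $(r-j)(j+1) + (2j+1-r)j = j^2 + r$.
\end{itemize}

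In the second case the last part is $j$ (when $r\le 2j$) while there are $j+1$ rows, so fact (4) does not apply directly. This is the one real check. For $r = 2j+1$ the partition is the square $(j+1)^{j+1}$, which is wide by fact (4).

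For $j < r \leq 2j$, I would instead view $\lambda$ as $(j^{\,j+1})$ plus a single column of height $r-j \leq j+1 = \lambda'_1+1$ added on the left. Wideness of the added column is fine, but the base $j^{\,j+1}$ is not wide, so fact (2) cannot be used as stated either. I would therefore check wideness directly via the lower-subpartition criterion. Every lower subpartition $\mu$ has the form $((j+1)^a, j^b)$ with $a+b \leq j+1$. Its conjugate is $((a+b)^j, a)$ when $a>0$. For $k \leq j$, the partial sums satisfy
\[ \sum_{i\le k}\mu_i \;\geq\; k\cdot \min(j,\ldots) \;\geq\; k(a+b) \]
whenever $a+b\le j$ or $k\le a+b-1$ (handle carefully). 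The remaining case is $a+b = j+1$, where $a = r-j \geq 1$. There the first $k$ partial sums are $kj + \min(k,a)$ against $k(j+1)$, which needs $\min(k,a) \ge k$, i.e. $k\le a$. This fails for $k > a$.

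So this choice of $\lambda$ is not wide, and the main obstacle is choosing a better construction. I would try a staircase-like shape with first part $j+1$ and $j+1$ rows in which the last part is smaller but fact (1) still holds. For example, take $(j+1)^{\,j}$ followed by a last row of length $r-j$, i.e. subtract boxes from the bottom of the square. Its lower subpartitions are $((j+1)^b, c)$ with $c=r-j$, whose conjugates are $((b+1)^{c}, b^{\,j+1-c})$. The dominance inequalities reduce to $k(j+1) \geq k(b+1)$ for $k\le b$, which holds, and then to the final sum, which is total size and is equal. I would verify these inequalities carefully, together with the case $r\le j$ above, to complete the proof.
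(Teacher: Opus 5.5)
Your proof is correct. The lower bound is the paper's argument: $\lambda\succeq\lambda'$ at the first index gives $\ell(\lambda)\le\lambda_1$, hence $n\le \ell(\lambda)\lambda_1\le\lambda_1^2$.

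For sharpness you take a somewhat different route. The paper uses a single family: write $n=q(j+1)+r$ with $0\le r\le j$, take $((j+1)^q,r)$, and verify every lower subpartition $((j+1)^t,r)$ directly. Your final construction for $j^2+j<n\le (j+1)^2$, namely $((j+1)^j,\,n-j(j+1))$, is this family with $q=j$. Your inequality $k(j+1)\ge k(b+1)$ for $k\le b$ is the same computation the paper does.

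The real difference is the range $j^2<n\le j^2+j$. There you take $((j+1)^r,j^{\,j-r})$ with $n=j^2+r$. It has $j$ rows and last part at least $j$, so wideness follows from the squarish criterion (fact (4)) with no computation. For $j\ge 3$ this can differ from the paper's partition; for $n=10$ you get $(4,3,3)$ where the paper gets $(4,4,2)$. Your version makes that case computation-free, at the cost of a case split and reliance on a cited result. The paper's single family needs only the one dominance check and is self-contained.

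When you write it up, make two fixes:
\begin{enumerate}
\item Delete the abandoned candidate $((j+1)^{r-j},j^{\,2j+1-r})$. Your claim that it is not wide holds only for $j<r<2j$; at $r=2j$ it coincides with your final choice and is wide.
\item At the last index $k=b+1$, state the inequality as $\sum_{i\le b+1}\mu_i=|\mu|\ge\sum_{i\le b+1}\mu'_i$, not as an equality. The partial sum of $\mu'$ reaches $|\mu|$ only when $b=j$.
\end{enumerate}
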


\begin{proof}
  Suppose $n \in \N$ is such that $j^2 < n \leq (j+1)^2$ for $j \in \Z_{\geq0}$. If $\lambda \vdash n$ is wide, then it must be contained inside the rectangle of size $\lambda_1' \times \lambda_1$. This implies $n \leq\lambda_1\lambda_1'$ and wideness forces $\lambda_1 \geq \lambda_1'$. Since $j^2 < n$, it must be that $j<\lambda_1$ and so $\lambda_1 \geq j+1$. 
  
  It remains to show that a wide partition $\lambda \vdash n$ with $\lambda_1=j+1$ exists. If $n=q(j+1)+r$ with $0\leq r\leq j$, then let $\lambda=((j+1)^q,r)$. The constraints on $n$ imply that $1 \leq q \leq j+1$ where it is only possible for $q=j+1$ if $r=0$ and $n=(j+1)^2$. We have constructed a partition $\lambda$ with $\lambda_1=(j+1)$ that we now show is wide. We check the lower subpartitions of $\lambda$. The partition $(r)$ is trivially wide, and the remainder take the form $\mu=((j+1)^t,r)$ for $1\leq t \leq q$.

  If $r=0$, then $\mu=(j+1)^t$ where $t\leq j+1$ and so $\mu \succeq \mu'$. Suppose $r \geq 1$, meaning $t \leq q \leq j$. In this case, $\mu' = ((t+1)^r, t^{j+1-r})$. Then, for $i \leq t$, we have $\mu_1 + \cdots + \mu_i = i(j+1) \geq i(t+1) \geq \mu_1' + \cdots + \mu_i'$. Finally, for $i \geq t+1$, we have $\mu_1 + \cdots + \mu_{i} = |\mu| \geq \mu_1' + \cdots + \mu_{i}'$.
\end{proof}

\begin{proposition} If $\lambda$ is a wide partition of $n$, then the maximum number of rows $\ell(\lambda)$ is given by the integer $k$ where $\frac{k(k+1)}{2} \leq n < \frac{(k+1)(k+2)}{2}$.
\end{proposition}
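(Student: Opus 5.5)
The argument has two halves, mirroring the preceding proposition. First I show that every wide $\lambda \vdash n$ satisfies $\ell(\lambda)\le k$. Then I exhibit a wide partition of $n$ with exactly $k$ rows.

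For the upper bound, I use fact (1) from \cite{chow2003wide} listed above: if $\lambda$ is wide with $\ell=\ell(\lambda)$, then $\lambda_{\ell-i}>i$ for all $i\ge 0$, i.e.\ $\lambda_{\ell-i}\ge i+1$. Summing over $i=0,\ldots,\ell-1$ gives
\[
n=|\lambda|\ge 1+2+\cdots+\ell=\tfrac{\ell(\ell+1)}{2}.
\]
If $\ell\ge k+1$, this yields $n\ge \frac{(k+1)(k+2)}{2}$, contradicting the choice of $k$. Hence $\ell(\lambda)\le k$.

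For the construction, write $n=\frac{k(k+1)}{2}+r$ with $0\le r\le k$; this range is exactly the hypothesis $\frac{k(k+1)}{2}\le n<\frac{(k+1)(k+2)}{2}$. Start from the staircase $\delta=(k,k-1,\ldots,1)$ and add one box to each of the first $r$ rows, giving
\[
\lambda=(k+1,k,\ldots,k-r+2,\;k-r,k-r-1,\ldots,1).
\]
In other words, $\lambda=\delta+\mu$ where $\mu=(1^r)$ is a single column of height $r$. This $\lambda$ is a partition with $k$ rows, and it remains to show it is wide. The staircase is wide, since it is self-conjugate and each lower subpartition is again a staircase, hence self-conjugate. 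Now $\mu$ is a column of height $r\le k\le \delta_1'+1$, so fact (2) from \cite{chow2003wide} gives that $\delta+\mu$ is wide. When $r=0$, $\lambda=\delta$ and there is nothing to add.

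I expect the main point needing care to be checking that fact (2) applies in the intended sense, namely that $\lambda+\mu$ adds the column to the first $r$ rows, which matches the definition $(\lambda+\mu)_i=\lambda_i+\mu_i$. If I want to avoid relying on it, I would check wideness directly via the lower-subpartition criterion. Each lower subpartition of $\lambda$ is either a staircase $(j,\ldots,1)$, which is self-conjugate, or has the form $(j+1,j,\ldots,j-s+2,\,j-s,\ldots,1)$ with $s\le j$. For the latter, the conjugate is obtained from the staircase $(j,\ldots,1)$ by adding the row $(s)$ at the bottom, i.e.\ its conjugate is $(j+1,\ldots)$ with column heights increased in the first $s$ columns. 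Its partial sums are bounded by those of the original because both are staircases perturbed by $s$ boxes, placed in the first $s$ rows versus the first $s$ columns. Comparing partial sums, the row version front-loads the extra boxes at least as much, giving the dominance.
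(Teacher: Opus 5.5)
Your proof is correct. The upper bound is exactly the paper's argument: both use the staircase containment $\lambda_{\ell-i}>i$, and your summation $n\ge 1+2+\cdots+\ell$ just spells out why this gives $\ell(\ell+1)/2\le n$.

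The difference is in showing the bound is attained. The paper equates the maximum with ``the biggest staircase that fits in $n$ boxes'' but never exhibits a wide partition of $n$ with $k$ rows. You construct one, so your write-up is the more complete of the two.

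Your use of fact (2) is legitimate, since $r\le k=\delta_1'$, but you can shortcut it. Your $\lambda=(k+1,k,\ldots,k-r+2,k-r,\ldots,1)$ has distinct parts, so it is wide by the observation on strict partitions just before Lemma~\ref{lem:strict1_dec}; indeed it is Latin via $T(i,j)=\lambda_i-j+1$. Putting all $r$ extra boxes in the first row, $(k+r,k-1,\ldots,1)$, works equally well.

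Your optional direct check reaches the right conclusion but misdescribes the conjugate. For $\nu=\delta_j+(1^s)$ with $s\le j$ one has $\nu'_1=\ell(\nu)=j$, not $j+1$. In fact $\nu'$ is $\delta_j$ with an extra part $s$ inserted: pad $\delta_j$ by a $0$ in position $j+1$ and add $1$ in positions $j-s+2,\ldots,j+1$. By contrast, $\nu$ adds $1$ in positions $1,\ldots,s$. Dominance then follows from $\min(i,s)\ge\max(0,\,i-j-1+s)$ for $i\le j+1$. That paragraph is unnecessary anyway and can be dropped.
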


\begin{proof} 

The maximum number of rows for a wide partition of $n$ is equivalent to the number of rows in the biggest staircase shape that can be contained in a partition of $n$. This follows from the fact that $\lambda_{\ell(\lambda)-i}>i$ (staircase containment) \cite{chow2003wide}.  The biggest staircase of size $k$ that can fit within a partition of size $n$ is the one given by the $k$ such that $\frac{k(k+1)}{2} \leq n < \frac{(k+1)(k+2)}{2}.$
\end{proof}

It is straightforward to see that if $\lambda$ is a partition made up of distinct parts (the parts strictly decrease), then $\lambda$ is wide. Consider the diagram of a strict partition $\lambda$.  Filling each cell $(i,j)\in \lambda$ with $\lambda_i-j+1$ yields a Latin tableau. Since $\lambda$ is Latin, it must be wide. We can also show that a partition whose first row repeats but is otherwise strict is wide, which serves in the proof of our final result of this section.

\begin{lemma}\label{lem:strict1_dec}
    If $\lambda$ is a partition with $\lambda_1 = \lambda_2>\lambda_3 >\cdots > \lambda_{\ell(\lambda)}$ and $\lambda_1 \geq \lambda_1
'$, then $\lambda$ is wide and Latin.
\end{lemma}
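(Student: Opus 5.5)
The plan is to construct an explicit Latin tableau of shape $\lambda$ and type $\lambda'$. Wideness then follows immediately from Reiner's argument (Latin $\Rightarrow$ wide) recalled in the introduction. Write $a=\lambda_1=\lambda_2$ and $\ell=\ell(\lambda)$, so that $\lambda_1'=\ell$ and the hypothesis reads $a\geq \ell$.

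\textbf{Rows other than row 2.} Following the construction for strict partitions, fill cell $(i,j)$ with $\lambda_i-j+1$ for every $i\neq 2$. Each such row $i$ is a permutation of $[\lambda_i]$. In any column $j$, the entries coming from rows $1,3,4,\ldots,\ell$ are pairwise distinct, because $a=\lambda_1>\lambda_3>\cdots>\lambda_\ell$ are distinct.

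\textbf{Row 2.} Fill row $2$ with a cyclic shift of row $1$. Fix a shift $c\in\{1,\ldots,a-1\}$. Put $a-j+1-c$ in cell $(2,j)$ when $j\leq a-c$, and $2a-c-j+1$ when $j>a-c$. Row $2$ is then a permutation of $[a]$. Its entry in every column differs from row $1$'s entry $a-j+1$, since the two differ by $c$ or by $a-c$, neither of which is $0$.

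Next compare row $2$ with a row $i\geq 3$ in a column $j\leq\lambda_i$.
\begin{itemize}
\item In the wrapped columns $j>a-c$, a coincidence $2a-c-j+1=\lambda_i-j+1$ would force $\lambda_i=2a-c>a$, which is impossible.
\item In the unwrapped columns $j\leq a-c$, a coincidence $a-c-j+1=\lambda_i-j+1$ happens exactly when $\lambda_i=a-c$.
\end{itemize}
So it suffices to choose $c$ so that $a-c\notin\{\lambda_3,\ldots,\lambda_\ell\}$.

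\textbf{Choosing $c$ (the key step).} The value $a-c$ ranges over the $a-1$ numbers $\{1,\ldots,a-1\}$. The forbidden set $\{\lambda_3,\ldots,\lambda_\ell\}$ has exactly $\ell-2$ elements. Since $\ell\leq a$ by hypothesis, we have $\ell-2<a-1$, so an admissible $c$ exists. This is the one place where $\lambda_1\geq\lambda_1'$ is used, and it is the step I expect to need the most care, mainly checking the wrap-around columns and the edge cases $\ell\leq 2$, which are trivial.

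With this $c$ the filling is a Latin tableau of shape $\lambda$ and type $\lambda'$, so $\lambda$ is Latin, and hence wide by Reiner's argument.
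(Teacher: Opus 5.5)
Your proposal is correct and is essentially the paper's proof: the paper fills rows $2$ through $\ell$ with $\lambda_i-j+1$ and gives row $1$ the cyclic shift $(k,k-1,\ldots,1,a,\ldots,k+1)$, where $k<a$ is a length not attained by any row. This is your row with $k=a-c$, and it is found by the same count of $\ell-2$ forbidden lengths against $a-1\geq \ell-1$ available ones. Putting the shifted row in position $2$ rather than $1$ is immaterial, since the two rows have equal length.
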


\begin{proof}
Fill rows $2$ through $\ell(\lambda)$ as strict partitions are filled in the algorithm above (cell $(i,j)$ is filled with $\lambda_i-j+1$). Observe that rows $3$ through $\ell(\lambda)$ use $\ell(\lambda)-2$ of the $\lambda_1-1$ possible lengths less than $\lambda_1$. Since $\lambda_1 = \lambda_2 \geq \ell(\lambda)$, there must be some $1 \leq k<\lambda_1$ such that no row of $\lambda$ has length $k$.  Fill the first row with increasing entries from right to left starting with a 1 in column $k$. After filling the first column, loop back around to the final column and continue filling right to left.
This creates a Latin tableau of shape $\lambda$ and type $\lambda'$, so $\lambda$ is Latin and thus wide.
\end{proof}

In studying wide partitions, it is evident that many properties depend on the relative length of $\lambda_1$ and $\lambda_1'$.
    We say that a partition $\lambda = (\lambda_1, \ldots, \lambda_{\ell})$ has a \emph{square corner} if $\lambda_1 = \lambda_1'$. All self-conjugate partitions have square corners.

\begin{proposition}\label{prop:3.8}
    If $\lambda$ is a wide partition with a square corner, then $\lambda$ is indecomposable.
\end{proposition}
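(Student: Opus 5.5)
The argument is by contradiction: assume $\lambda = \mu + \nu$ with $\mu$ and $\nu$ both wide (hence nonempty), and derive a contradiction from $\lambda_1 = \lambda_1'$. The sum is taken row by row, so $\lambda_i = \mu_i + \nu_i$, where the shorter partition is padded with zeros. The plan is to compare the first row and first column of $\lambda$ with those of $\mu$ and $\nu$.

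First, the first row is additive: $\lambda_1 = \mu_1 + \nu_1$. Second, the number of nonzero rows of $\lambda$ is the larger of the two lengths, so $\lambda_1' = \ell(\lambda) = \max(\ell(\mu), \ell(\nu)) = \max(\mu_1', \nu_1')$.

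Next, wideness applied to $\mu$ itself (taking $i=1$ in $\mu \succeq \mu'$) gives $\mu_1 \geq \mu_1'$, and likewise $\nu_1 \geq \nu_1'$. Since both summands are nonempty, $\mu_1' \geq 1$ and $\nu_1' \geq 1$. Combining these,
\[
\lambda_1 = \mu_1 + \nu_1 \geq \mu_1' + \nu_1' > \max(\mu_1', \nu_1') = \lambda_1'.
\]
The strict inequality holds because the smaller of $\mu_1', \nu_1'$ is positive. So $\lambda_1 > \lambda_1'$, which contradicts the assumption that $\lambda$ has a square corner.

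The main obstacle is only bookkeeping with the conventions. We must be sure that decomposability requires both summands to be nonempty; otherwise the statement is trivially false, since $\lambda = \lambda + \emptyset$. We must also be sure that the component-wise sum pads with zeros, which is what makes $\ell(\lambda) = \max(\ell(\mu), \ell(\nu))$. With these conventions fixed, the inequality chain above completes the proof.
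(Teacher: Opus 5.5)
Your proof is correct and follows the paper's argument: both use $\lambda_1=\mu_1+\nu_1$, $\ell(\lambda)=\max(\ell(\mu),\ell(\nu))$, the inequality $\mu_1\ge\mu_1'$ that wideness gives, and the nonemptiness of both summands. The only difference is that the paper applies that inequality to one summand (chosen without loss of generality to have length $\ell(\lambda)$), while your single symmetric chain $\mu_1+\nu_1\ge\mu_1'+\nu_1'>\max(\mu_1',\nu_1')$ avoids that choice.
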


\begin{proof} Let $\lambda$ be a wide partition with a square corner and assume for contradiction that $\lambda = \mu + \nu$ for some wide partitions $\mu$ and $\nu$. Since $\lambda_1 = \ell(\lambda)$, we must have $\mu_1+\nu_1 = \ell(\lambda)$, meaning $0<\mu_1,\nu_1<\ell(\lambda)$. Since $\lambda_{\ell} > 0 $ either $\mu_{\ell} > 0$ or $\nu_{\ell} > 0$. In other words, either $\ell(\mu) = \ell(\lambda)$ or $\ell(\nu) = \ell(\lambda)$.  Without loss of generality,  assume $\ell(\mu) = \ell(\lambda)$. Then $(\mu')_1 = \ell(\lambda)$ and $\mu_1 < \ell(\lambda)$ which implies $\mu_1 < \mu'_1$. Thus $\mu \ngeq \mu'$, so $\mu$ is not wide, which is a contradiction. Therefore, we have shown that $\lambda$ is indecomposable.
\end{proof}

\begin{proposition}\label{prop:3.9} Let $\lambda$ be a wide partition with $\lambda_1 > \lambda_{1}'$. If $\lambda$ indecomposable, then there exists some $i \in [\ell(\lambda)-1]$ such that $\lambda_i = \lambda_{i+1}$.
\end{proposition}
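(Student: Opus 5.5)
Prove the contrapositive: if $\lambda$ is wide with $\lambda_1>\lambda_1'$ and all parts distinct, then $\lambda$ is decomposable. Write $\ell=\ell(\lambda)$ and decompose $\lambda=\mu+\nu$, where $\nu=(1^{\ell})$ is a single column of height $\ell$ and $\mu=(\lambda_1-1,\ldots,\lambda_\ell-1)$, with trailing zeros dropped.

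\emph{Step 1: $\nu$ is wide.} Every subpartition of $\nu$ has the form $(1^k)$, whose conjugate is $(k)$. For $k\ge 2$ we have $(1^k)\not\succeq(k)$, so a single column is wide only when it is a single box. The naive split fails when $\ell\ge 2$, so I change the decomposition.

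\emph{Step 2: a better split.} Since the parts are distinct, the staircase $\delta=(\ell,\ell-1,\ldots,1)$ fits inside $\lambda$ with $\lambda_i\ge \ell-i+1$. Split $\lambda=\delta+\mu$ with $\mu_i=\lambda_i-(\ell-i+1)$.
\begin{itemize}
\item The strict inequalities $\lambda_i>\lambda_{i+1}$ give $\lambda_i-\lambda_{i+1}\ge 1$, so $\mu_i\ge\mu_{i+1}$ and $\mu$ is a partition.
\item The staircase $\delta$ has distinct parts, so it is wide by the remark preceding Lemma \ref{lem:strict1_dec}.
\item $\mu$ is nonzero, because $\lambda_1>\lambda_1'=\ell$ gives $\mu_1=\lambda_1-\ell\ge 1$.
\item The nonzero parts of $\mu$ need not be distinct, so $\mu$ need not be wide; this is a gap.
\end{itemize}

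\emph{Step 3: a split with the full first column.} Instead take $\nu=(\lambda_1-\ell,0,\ldots,0)$, the single row that is the excess of the first row, and $\mu=(\ell,\lambda_2,\ldots,\lambda_\ell)$.
\begin{itemize}
\item $\nu$ is a single row, which is trivially wide.
\item $\mu$ is a partition because $\ell\ge\lambda_2$. The case $\lambda_2\le\ell$ is what I need; it requires justification, addressed in Step 4.
\item Assuming $\ell\ge\lambda_2$, the part sizes are fixed except possibly $\ell=\lambda_2$. So $\mu$ is either strict with $\mu_1=\ell=\mu_1'$, or has $\mu_1=\mu_2>\mu_3>\cdots$ with $\mu_1\ge\mu_1'$.
\item In the first case $\mu$ is wide by the strict remark. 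In the second case $\mu$ is wide by Lemma \ref{lem:strict1_dec}.
\end{itemize}

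\emph{Step 4: handling $\lambda_2>\ell$, the main obstacle.} Here peel an excess row of length $\lambda_1-\lambda_2$ only if $\lambda_2\ge\ell$. Otherwise, set $d=\min_i(\lambda_i-\lambda_{i+1})$ with $\lambda_{\ell+1}=0$, so $d\ge 1$. Take $\nu=(d,d,\ldots)$, a rectangle $d\times\ell$, which is wide iff $d\ge\ell$. That is not guaranteed either.

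The cleanest general choice is therefore $\nu=(1)$, a single box, added to some row $r$ where $\lambda_r-1>\lambda_{r+1}$ (or $\lambda_\ell\ge 2$). Then $\mu=\lambda-e_r$ stays strict, hence wide, and the sum $\mu+(1)$ places the box in row $1$ only. So I need row $1$, with $\mu=(\lambda_1-1,\lambda_2,\ldots)$. This is strict when $\lambda_1-1>\lambda_2$, and when $\lambda_1-1=\lambda_2$ it satisfies Lemma \ref{lem:strict1_dec}, since $\lambda_1-1\ge\ell=\mu_1'$ by $\lambda_1>\lambda_1'$.

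Hence $\lambda=(\lambda_1-1,\lambda_2,\ldots,\lambda_\ell)+(1)$ is a sum of two wide partitions, so $\lambda$ is decomposable, which contradicts indecomposability. This last argument supersedes Steps 2–4 and is the proof I would write. The main check is the equality case, which Lemma \ref{lem:strict1_dec} handles.
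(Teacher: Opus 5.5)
Your final argument is correct and is exactly the paper's proof. You prove the contrapositive by writing $\lambda=(\lambda_1-1,\lambda_2,\ldots,\lambda_\ell)+(1)$: the first summand is strict (hence wide) when $\lambda_1-1>\lambda_2$, and is wide by Lemma \ref{lem:strict1_dec} when $\lambda_1-1=\lambda_2$, since $\lambda_1-1\ge\ell$. In a write-up, drop Steps 1--3 and the first part of Step 4, which you abandon and which are not needed.
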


\begin{proof} Let $\lambda$ be a wide partition with $\lambda_1 > \lambda_{1}'$. We proceed by proving the contrapositive. Suppose that $\lambda_1 > \lambda_2 > \lambda_3 > \cdots > \lambda_{\ell(\lambda)}$.  If $\lambda_1 - 1 > \lambda_2$ then $(\lambda_1 -1, \lambda_2, \ldots, \lambda_{\ell(\lambda)})$ is strict and, thus, wide.  If $\lambda_1 - 1 = \lambda_2$, then $(\lambda_1 -1, \lambda_2, \ldots, \lambda_{\ell(\lambda)})$ is wide by Lemma \ref{lem:strict1_dec}, since $\lambda_1 > \ell(\lambda)$ implies $\lambda_2 \geq \ell(\lambda)$.
Thus $\lambda$ is decomposable into wide partitions as $(\lambda_1 - 1, \lambda_2, \lambda_3, \ldots, \lambda_{\ell(\lambda)}) + (1).$
\end{proof}

Note that these Propositions \ref{prop:3.8} and \ref{prop:3.9} together imply that a strict wide partition $\lambda$ is indecomposable if and only if it is a staircase shape.

 \section{Latin fillings on families of wide partitions}\label{sec:WPC}

We prove Latinness constructively for three families of wide partitions. Our first theorem considers a large, natural class of wide partitions: those whose parts are all greater than or equal to their number of parts (these are referred to as \emph{squarish} in \cite{chow2003wide}). Our first result uses an existence proof via edge-list-coloring, while the remaining results in this section use explicit constructions.

\begin{theorem}\label{thm:fat}
    Let $\lambda = (\lambda_1, \ldots, \lambda_{\ell})$ be a partition. If $\lambda_{\ell} \ge \ell$, then $\lambda$ is Latin. 
\end{theorem}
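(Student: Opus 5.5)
The plan is to realize the statement as a list-edge-coloring problem on the bipartite graph $G(\lambda)$ described in the introduction, and then to invoke the degree-based generalization of Galvin's theorem. For a partition $\lambda$, $G(\lambda)$ has a vertex $r_i$ for each row $i\in[\ell]$, a vertex $c_j$ for each column $j\in[\lambda_1]$, and an edge $r_ic_j$ for each cell $(i,j)\in\lambda$. To the edge of cell $(i,j)$ we assign the list $L(i,j)=\{1,2,\ldots,\lambda_i\}$. A proper list-edge-coloring of $G(\lambda)$ with these lists is then exactly a filling in which entries are distinct along each row and along each column, and each entry in row $i$ lies in $[\lambda_i]$.

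The first step is to compare list sizes with degrees. The row vertex $r_i$ has degree $\lambda_i$. The column vertex $c_j$ has degree $\lambda'_j\le \lambda'_1=\ell$. The hypothesis $\lambda_\ell\ge \ell$ gives, for every cell $(i,j)$,
\[
|L(i,j)|=\lambda_i\ \ge\ \lambda_\ell\ \ge\ \ell\ \ge\ \lambda'_j,
\]
and hence $|L(i,j)|\ge \max(\deg r_i,\deg c_j)$.

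The second step applies the theorem of Borodin, Kostochka and Woodall, which extends Galvin's solution of the Dinitz problem \cite{GALVIN1995153}. It states that a bipartite (multi)graph whose edge lists satisfy $|L(uv)|\ge\max(\deg u,\deg v)$ admits a proper list-edge-coloring. This produces a filling $T$ of $\lambda$ with no repeated entries in any row or column.

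The third step checks that $T$ has type $\lambda'$. Row $i$ contains $\lambda_i$ distinct entries drawn from the $\lambda_i$-element set $[\lambda_i]$, so it contains exactly $1,\ldots,\lambda_i$. Hence $T$ is a Latin tableau of shape $\lambda$ and type $\lambda'$, and $\lambda$ is Latin.

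The main obstacle is that Galvin's theorem, as usually stated, concerns uniform lists of size $n$ on $K_{n,n}$, whereas here the lists and degrees vary. If a self-contained argument is preferred to citing the degree version, I would rerun Galvin's kernel method directly. One takes a proper edge coloring $\phi$ of $G(\lambda)$ with $\lambda_1$ colors, which exists by K\"onig's theorem, and orients the line graph as follows. At a row vertex, arcs point toward the edge with larger $\phi$; at a column vertex, arcs point toward the edge with smaller $\phi$. The orientation is kernel-perfect by the stable-matching (Gale--Shapley) argument. The delicate point is bounding out-degrees by $|L(i,j)|-1=\lambda_i-1$. Because column degrees are at most $\ell\le\lambda_\ell$, I would choose $\phi$ so that in each row $i$ the colors used are exactly $\{1,\ldots,\lambda_i\}$. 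Then the out-degree at the row end is $\lambda_i-\phi(e)$, the out-degree at the column end is at most $\phi(e)-1$, and the total is at most $\lambda_i-1$, as needed.

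This requires a proper edge coloring of $G(\lambda)$ whose row $i$ uses exactly the colors $1,\ldots,\lambda_i$, which is itself a Latin tableau of shape $\lambda$ and type $\lambda'$. So this route is circular, and the self-contained version does not go through as sketched. I would therefore rely on citing the Borodin--Kostochka--Woodall theorem.
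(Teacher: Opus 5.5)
Your proposal is correct and is essentially the paper's proof: you use the same bipartite graph with lists $\{1,\ldots,\lambda_i\}$, the same chain $\lambda_i\ge\lambda_\ell\ge\ell\ge\lambda'_j$, and the same appeal to Borodin--Kostochka--Woodall, and your explicit check that row $i$ then contains exactly $1,\ldots,\lambda_i$ spells out a step the paper leaves implicit. The Galvin-style self-contained detour is not needed, and you are right that it is circular as sketched; citing the theorem, as the paper does, suffices.
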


\begin{proof}
Let $G_{\lambda}$ be a bipartite graph with vertices $V(G) = \{r_1, r_2, \ldots, r_{\ell}\} \cup \{c_1, c_2, \ldots, c_{\lambda_1}\}$.  For each cell $(i,j)\in \lambda$, let $G_{\lambda}$ have an edge $e_{ij}$ between $r_i$ and $c_j$. Note that $deg(r_i)=\lambda_i$ and $deg(c_j)=\lambda'_j$. A Latin tableau of shape $\lambda$ and type $\lambda'$ corresponds exactly to a proper edge-coloring of $G_{\lambda}$ in which the possible colors for $e_{ij}$ are given in the list $L(e_{i,j})=\{1,2, \ldots, \lambda_i\}$.

Given that $\lambda_{\ell} \geq\ell$, we have $\lambda_i \geq\lambda_{\ell} \geq \ell = \lambda_1' \geq \lambda'_j$ for all $i,j$.  Thus, for every $(i,j)\in \lambda$, \[|L(e_{ij})| = \lambda_i = max(\lambda_i, \lambda'_j) = max(deg( r_i),deg(c_j)).\] By Borodin-Kostochka-Woodall \cite[Thm 3]{boro}, $G_{\lambda}$ is ``edge-$f$-choosable" with $f(e_{ij})=\lambda_i$ and so admits a list-edge-coloring as desired. To obtain a Latin tableau from such a coloring, fill cell $(i,j)\in \lambda$ with the color used for edge $e_{ij}$. 
\end{proof}

Our next theorem considers a large subset of wide partitions that have exactly one repeated part. Lemma \ref{lem:strict1_dec} addressed a special case of this more general statement; specifically, the case where the two rows of the same length are the first and second rows. 

\begin{theorem}\label{thm:wide-strict-rep}
    Let $\lambda$ be a partition of length $\ell$ with $\ell-1$ distinct parts, and let $i$ be the part of $\lambda$ that appears twice. Let $M = \{m < i : m \not= \lambda_j, j\in[\ell]\}$. If there exists an $m \in M$ such that $m+i\not=\lambda_j$ for all $j\in[\ell]$, then $\lambda$ is Latin and wide.
\end{theorem}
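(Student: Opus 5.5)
The plan is to generalize the construction in the proof of Lemma~\ref{lem:strict1_dec}: give every row the ``strict'' filling except one of the two rows of length $i$, which gets a cyclically shifted filling. Wideness then comes for free from Reiner's argument (Latin $\Rightarrow$ wide), so it is enough to build a Latin tableau of shape $\lambda$ and type $\lambda'$.

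\textbf{Construction.} Let the two rows of length $i$ be rows $a$ and $a+1$. Fix $m\in M$ with $m+i\neq\lambda_j$ for all $j$. Here $1\le m\le i-1$, since $M$ consists of positive integers less than $i$.
\begin{itemize}
\item For every row $r\neq a+1$, fill cell $(r,j)$ with $\lambda_r-j+1$.
\item For row $a+1$, fill cell $(a+1,j)$ with $m-j+1$ when $1\le j\le m$, and with $m+i-j+1$ when $m<j\le i$.
\end{itemize}
The second rule is the right-to-left cyclic filling of Lemma~\ref{lem:strict1_dec}, starting with $1$ in column $m$ and wrapping around to column $i$. Each row $r$ then contains exactly the entries $1,\dots,\lambda_r$, as required.

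\textbf{Verification.} It remains to check that no column has a repeated entry. The rows $r\neq a+1$ have pairwise distinct lengths, so within column $j$ their entries $\lambda_r-j+1$ are distinct. It therefore suffices to compare the entry of row $a+1$ in column $j\le i$ with the entry $\lambda_r-j+1$ of each other row $r$ reaching column $j$.
\begin{itemize}
\item If $j\le m$, a collision means $\lambda_r-j+1=m-j+1$, that is, $\lambda_r=m$. This is impossible because $m\notin\{\lambda_1,\dots,\lambda_\ell\}$.
\item If $m<j\le i$, a collision means $\lambda_r=m+i$. This is excluded by the choice of $m$.
\end{itemize}
This includes the comparison with row $a$, whose length is $i$: a collision there would require $i=m$ or $i=m+i$, and both are impossible since $1\le m<i$.

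Hence the filling is a Latin tableau of shape $\lambda$ and type $\lambda'$, so $\lambda$ is Latin, and by Reiner's argument it is also wide.

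The main step to get right is the column-by-column collision check, specifically the wrapped segment $m<j\le i$. This is exactly where the hypothesis $m+i\neq\lambda_j$ is used, and where a naive shift could collide with a longer row. I would present the check as the two cases above.
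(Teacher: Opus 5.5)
Your proposal is correct and is essentially the paper's proof: all other rows get the strict filling $\lambda_r - j + 1$, one of the two rows of length $i$ gets $(m, m-1, \ldots, 1, i, i-1, \ldots, m+1)$, and column conflicts are ruled out by the hypotheses $m \neq \lambda_r$ and $m+i \neq \lambda_r$. The only difference is cosmetic: the paper modifies the upper of the two equal rows rather than the lower, and your case-by-case collision check spells out the paper's remark that this row agrees entrywise with strict rows of lengths $m$ and $m+i$.
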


\begin{proof}
    Let $\lambda$, $i$, and $m$ be as stated, and let $\lambda_{j}=\lambda_{j+1}=i$. We construct a Latin tableau of shape $\lambda$ as follows.
    For $x \not= j$, set $T(x,y)=\lambda_x-y+1$.  These rows all take a decreasing form: $(\lambda_x, \lambda_x-1, \ldots, 2,1 )$.
    For $y\leq m$, let $T(j,y)=m-y+1$. For $y > m$, let $T(j,y)= i - y + m +1$.
    Thus, the row $j$ has the form $(m, m-1, \ldots, 1, i, i-1, \ldots, m+1)$.

    Given that the rows with the exception of row $j$ have distinct lengths, there will be no repeated elements in any column among those rows. This is because two entries in the same column $y$ but different rows $a$ and $b$ will have values $\lambda_a-y+1$ and $\lambda_
    b-y+1$, which can't be equal since $\lambda_a \not= \lambda_b$. It remains to check that row $j$ does not create a conflict with any of these rows.  The entries in row $j$ all coincide with those that would occur in the decreasing row of length $m$ or length $m+i$. By assumption, these rows would have a distinct length from any others in the tableaux, and thus none of their entries would cause conflict.
    This filling produces a Latin tableau, and so $\lambda$ is Latin and thus also wide.
\end{proof}

Note that if $\lambda$ is wide, then the set $M$ is nonempty. If it weren't, $\lambda$ would have the subpartition $(i,i,i-1, \ldots, 2,1)$, which wouldn't dominate its conjugate as its first part is smaller than its length.

\begin{example}
 The algorithm in the proof above yields the following Latin tableau on $(7,5,5,4,2)$ where we pick $m=3$:
\[
 \begin{ytableau}
    7 & 6 & 5 & 4 & 3 & 2 & 1\\
    \mathbf{3} & \mathbf{2} & \mathbf{1} & \mathbf{5} & \mathbf{4}\\
   5 & 4 & 3 & 2 & 1\\
    4& 3 & 2 & 1\\
     2&1
 \end{ytableau}\] 
\end{example}

Next we consider shapes in which we can create a Latin filling that breaks $\lambda$ up into Latin rectangles.
\begin{theorem}\label{thm:gcd-mult}
    Let $\lambda = (\lambda_1, \ldots, \lambda_{\ell})$ be a partition with $w=\gcd(\lambda_1,\dots,\lambda_\ell)$, and let $m$ be the largest multiplicity of a part. If $w\ge m$, then $\lambda$ is Latin and wide.
\end{theorem}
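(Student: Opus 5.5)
We will give an explicit filling that splits $\lambda$ into blocks of $w$ consecutive columns, and build each block out of cyclic Latin rows. Every part $\lambda_x$ is a multiple of $w$, so we can write $\lambda_x = w q_x$. Then row $x$ consists of exactly $q_x$ full blocks, namely columns $(b-1)w+1,\dots,bw$ for $b=1,\dots,q_x$. Block $b$ of row $x$ will receive the symbol set $\{(c-1)w+1,\dots,cw\}$ for some $c=c_x(b)$, arranged as a cyclic shift. Row $x$ must contain exactly $1,\dots,\lambda_x$, so for each fixed $x$ the map $b\mapsto c_x(b)$ has to be a permutation of $[q_x]$.

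The column condition splits into two parts. Two cells in the same column $y$ lie in the same block index $b$. If their rows use different symbol blocks $c$ there, they cannot conflict, since the symbol sets are disjoint. If they use the same $c$, they must use different cyclic shifts within $\{(c-1)w+1,\dots,cw\}$. A cyclic shift of $w$ symbols across $w$ columns gives $w$ available shifts, and distinct shifts never collide in any column. So it suffices to have, for each pair $(b,c)$, at most $w$ rows $x$ with $c_x(b)=c$; we then assign those rows distinct shifts.

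We choose $c_x(b)=q_x-b+1$. This is the reversed block pattern, a block analogue of the strict-partition filling $\lambda_x-y+1$ used earlier. Two rows with $c_x(b)=c_{x'}(b)$ must then have $q_x=q_{x'}$, which means $\lambda_x=\lambda_{x'}$. For each $(b,c)$, the rows sharing that symbol block are therefore rows of one common part size, and there are at most $m$ of them. Since $m\le w$, we can give the rows of each equal-part group distinct shifts $0,1,\dots,m-1$ and use them uniformly across all blocks of those rows. Explicitly, if row $x$ is the $s$-th row of its group, we set
\[T(x,(b-1)w+t) = (q_x-b)w + \bigl((t+s-1)\bmod w\bigr)+1 .\]
Each row is then a permutation of $[\lambda_x]$. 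In each column, rows of different part sizes lie in disjoint symbol blocks, and rows of equal part size use different shifts. This makes $T$ a Latin tableau of shape $\lambda$ and type $\lambda'$, so $\lambda$ is Latin, and it is wide by Reiner's argument.

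The step that needs care is checking that the reversed assignment $c_x(b)=q_x-b+1$ really separates different part sizes in every column. This reduces to the injectivity of $q\mapsto q-b+1$, so I expect it to be straightforward. The only place the hypothesis $w\ge m$ enters is in having enough cyclic shifts for each group of equal rows.
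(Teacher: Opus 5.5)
Your proposal is correct and is essentially the paper's proof: the paper's filling $T(i,j)=(\lambda_i-wb)+1+((t_i+s)\bmod w)$, with $b=\lceil j/w\rceil$, $s=(j-1)\bmod w$ and $t_i$ the number of equal-length rows below row $i$, is exactly your reversed block assignment $c_x(b)=q_x-b+1$ together with a cyclic shift indexed by position within the group of equal rows. The column check splits into the same two cases. Equal rows are separated by distinct shifts, using $m\le w$, and unequal rows are separated because $\lambda_i-\lambda_h\ge w$ puts them in disjoint symbol blocks.
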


\begin{proof} Since $w | \lambda_i$, each set of rows of the same length can be split into $\lambda_i/w$ consecutive blocks of width $w$ and height $\leq w$. 
To create a Latin tableau $T$ of shape $\lambda$, set $T(i,j) = (\lambda_i-wb) + 1 +((t_i + s) \mod w)$ where $b = \lceil j/w \rceil $ (its block), $s = (j-1) \mod w$ (its offset), and $t_i$ is the number of rows below row $i$ of the same length. 

First, we verify there is no repetition within any row of $T$. Consider some row $i$.  In block $b$, the values of $s$ are given by $\{0, \ldots, w-1\}$ so the values of $((t_i +s) \mod w)$ are also $\{0, \ldots, w-1\}$ and the values of $1+((t_i +s) \mod w)$ are $\{1,2,\ldots, w\}$. The entries in block $b$ then span values $\{ \lambda_i -wb+1, \ldots, \lambda_i-wb+w\}$. The possible values of $b$ are $\{1,2,\ldots, \lambda_i/w\}$, so there is no overlap between the entries in the different blocks. 

Next, we check that there are no repeats within columns. 
Consider row $i$ and row $h$, where $i <h$. Case 1: Let $\lambda_i=\lambda_h$. For entries in some column $j$ in row $i$ and $h$, the values of $b$ and $s$ are the same while $t_i \not=t_h$ and $t_i, t_h \in \{0, \ldots, m-1\}$ where $m - 1 \leq w-1$.  Thus, $(t_i + s) \mod w$ is not equal to $(t_h + s) \mod w$. It follows that $T(i,j) \not= T(h,j)$. 

Case 2: Let $\lambda_i > \lambda_h$.  Then, $T(i,j)-T(h,j)=(\lambda_i-\lambda_h)+ ((t_i +s)) \mod w - ((t_h)+s) \mod w$. Observe that $((t_i +s)) \mod w - ((t_h)+s) \mod w < w$ and $\lambda_i - \lambda_h \geq w$.  Therefore, $T(i,j)-T(h,j)>0$. Thus, we have shown for both cases that there cannot exist two entries of the same value in any column of $T$.
So, $T$ is a Latin tableau of shape $\lambda$. Having shown that $\lambda$ is Latin, it follows that $\lambda$ is wide. 
\end{proof}

\begin{example}
    Let $\lambda = (6,4,4,2,2)$ with $w=2$ and $m=2$.  The proof above yields the following Latin filling of $\lambda$:
\[ \begin{ytableau}
    5 & 6 & 3 & 4 & 1 & 2 \\
    4 & 3 & 2 & 1 \\
    3 & 4 & 1 & 2 \\
    2 & 1\\
    1 & 2 
\end{ytableau} \]
\end{example}

Given any partition, including one that is not wide, we can easily create a partition to which Theorem \ref{thm:gcd-mult} will apply.

\begin{corollary}
    If $\mu = (\mu_1, \ldots, \mu_{\ell})$ is any partition and $w$ is a positive integer greater than or equal to the largest multiplicity of any part in $\mu$, then $w\mu=(w\mu_1, \ldots, w\mu_{\ell})$ is Latin and wide.
\end{corollary}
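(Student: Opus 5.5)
This follows directly from Theorem \ref{thm:gcd-mult}. Set $\lambda = w\mu = (w\mu_1, \ldots, w\mu_\ell)$; since $\mu$ is weakly decreasing, $\lambda$ is a partition.

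The first step is to bound the gcd of the parts of $\lambda$. Every part $w\mu_i$ is divisible by $w$, so
\[
\gcd(w\mu_1,\dots,w\mu_\ell) = w\cdot\gcd(\mu_1,\dots,\mu_\ell) \ge w .
\]

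The second step is to compare multiplicities. Multiplication by the positive integer $w$ is injective, so $w\mu_i = w\mu_j$ if and only if $\mu_i=\mu_j$. Hence every part of $\lambda$ has the same multiplicity as the corresponding part of $\mu$. In particular, the largest multiplicity $m$ of a part of $\lambda$ equals that of $\mu$, and by hypothesis $m \le w$.

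Combining the two steps, $\gcd(\lambda_1,\dots,\lambda_\ell)\ge w\ge m$. This is exactly the hypothesis of Theorem \ref{thm:gcd-mult}, which gives that $\lambda$ is Latin. Wideness then follows from Reiner's argument that Latin implies wide. No step here is a real obstacle; the only point to get right is that the theorem's inequality is stated with the gcd itself, and the gcd may exceed $w$. Since the theorem only requires the gcd to be at least the maximal multiplicity, the inequality $\gcd \ge w \ge m$ is all that is needed.
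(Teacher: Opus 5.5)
Your proof is correct and is the same argument the paper intends. The paper gives no separate proof and presents the corollary as an immediate application of Theorem \ref{thm:gcd-mult}; your two checks, that $\gcd(w\mu_1,\dots,w\mu_\ell)=w\gcd(\mu_1,\dots,\mu_\ell)\ge w$ and that scaling by $w$ preserves multiplicities, are exactly what verifies that theorem's hypothesis.
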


We close this section with a result on the existence of \emph{symmetric Latin tableaux}, which we define to be Latin tableaux invariant under conjugation. Note that by this definition, only self-conjugate partitions can possess symmetric fillings.

\begin{proposition}\label{prop:symfill}
    If a self-conjugate partition $\lambda$ admits a symmetric Latin filling of type $\lambda'$, then the number of odd parts of $\lambda$ is at most its Durfee size. 
\end{proposition}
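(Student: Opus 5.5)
A symmetric Latin filling $T$ satisfies $T(a,b)=T(b,a)$. I plan a parity count for each symbol $k$, separating its occurrences into diagonal and off-diagonal cells.

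Fix a symbol $k$. By symmetry, the off-diagonal cells containing $k$ come in pairs $(a,b),(b,a)$ with $a\neq b$, and both cells lie in $\lambda$ because $\lambda$ is self-conjugate. Hence the number of cells containing $k$ is congruent modulo $2$ to the number of diagonal cells containing $k$. The type is $\lambda'=\lambda$, so $k$ appears exactly $\lambda_k$ times. If $\lambda_k$ is odd, then $k$ must therefore appear at least once on the main diagonal.

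The diagonal cells of $\lambda$ are $(1,1),\dots,(d,d)$, where $d$ is the Durfee size. Each such cell holds exactly one symbol. The map sending each odd part $\lambda_k$ to some diagonal cell containing $k$ is injective, because different indices $k$ are different symbols. So the number of indices $k$ with $\lambda_k$ odd is at most $d$.

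The only point needing care is the bookkeeping of ``odd parts.'' Parts are counted with multiplicity, as indices $k$, and symbol $k$ has multiplicity $\lambda'_k=\lambda_k$. This identification is where self-conjugacy is used; no deeper obstacle is expected.
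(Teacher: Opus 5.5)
Your proposal is correct and follows the paper's own argument. Symmetry pairs up the off-diagonal occurrences of each symbol, so each $k$ with $\lambda'_k=\lambda_k$ odd must occupy one of the Durfee-size many diagonal cells. Your explicit injectivity step, and your remark that parts are counted with multiplicity, make precise what the paper leaves implicit.
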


\begin{proof}
Let $T$ be a symmetric Latin tableau of shape $\lambda$ and type $\lambda'$. By definition, $T(i,j)=T(j,i)$ for all $i \not= j$. Thus, every element $k$ occurs off the main diagonal an even number of times in $T$. Specifically, $k$ occurs a total of $\lambda'_k = \lambda_k$ times. So, each odd part of $\lambda$ corresponds to an element that must occur on the diagonal,  whose length is given by the Durfee size. Thus, the number of odd parts of $\lambda$ must be less than or equal to the Durfee size.
\end{proof}

The condition in Proposition \ref{prop:symfill} is not sufficient for the existence of a symmetric Latin filling. For example, consider $\lambda=(7,7,5,5,4,2,2)$, which has Durfee size $4$ and exactly $4$ odd parts. There will be an odd number of $1$s, $2$s, $3$s, and $4$s, meaning each of those numbers must appear on the diagonal. It is then impossible to place the $6$s and $7$s in the top two rows in a symmetric way.

\section{Enumeration of Latin Tableaux}\label{sec:LT}

An enumerative formula for Latin tableaux of shape $\lambda$ and type $\lambda'$ provides a way to test the Latinness of a partition without finding a specific Latin filling, as $\lambda$ is Latin if and only if $LT(\lambda)>0$. Enumeration of Latin tableaux is also an interesting problem in its own right, and a natural extension of the problem of enumerating Latin squares.
In our main theorem of this section, we present a formula for $LT(\lambda)$, the number of Latin tableaux of shape $\lambda$ and type $\lambda'$, using $(0,1)$-matrices and matrix permanents. In the case where $\lambda$ is a square, we recover a classical result on Latin squares by Shao-Wei \cite{SHAO1992293}:
\[L_n = \sum_{A \in B_n} (-1)^{\sigma_0(A)} (per(A))^n,\] where $B_n$ is the set of $n\times n$ (0,1)-matrices, $\sigma_0(A)$ is the number of zero entries in matrix $A$, and $per(A)$ is the permanent of matrix $A$.

 Let $S_{k \times n}(t)$ be the set of $(0,1)$-matrices of size $k \times n$ where each row and column has at most one non-zero entry, and the first $t$ rows always contain a 1 while the remaining rows are zero. Let $J_{\lambda}$ be the $\ell(\lambda) \times \lambda_1$ matrix where the first $\lambda_i$ entries in row $i$ are 1 and all other entries are 0. Let $\rr_i(\lambda) = \max\{j : \lambda_j \geq i\}$, so $\rr_i(\lambda)$ is the number of rows in $\lambda$ with length of at least $i$. 

\begin{lemma} Let $\lambda$ be a partition with $\lambda_1 \geq \ell(\lambda)$.  Latin tableaux of shape $\lambda$ and type $\lambda'$ are in bijection with sequences of $\lambda_1$ matrices $(M_1, M_2, \ldots, M_{\lambda_1}) \in S_{\ell(\lambda) \times \lambda_1}(\rr_1(\lambda)) \times S_{\ell(\lambda) \times\lambda_1}(\rr_2(\lambda)) \times \cdots \times S_{\ell(\lambda) \times\lambda_1}(\rr_{\lambda_1}(\lambda))$ such that $M_1 + M_2 + \cdots + M_{\lambda_1} = J_{\lambda}$.
\end{lemma}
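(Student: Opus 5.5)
The bijection sends a Latin tableau $T$ of shape $\lambda$ and type $\lambda'$ to its sequence of \emph{symbol-position matrices}. For each symbol $k\in[\lambda_1]$, let $M_k$ be the $\ell(\lambda)\times\lambda_1$ $(0,1)$-matrix with $(M_k)_{a,b}=1$ exactly when cell $(a,b)\in\lambda$ and $T(a,b)=k$. The inverse takes a sequence $(M_1,\dots,M_{\lambda_1})$ with $\sum_k M_k=J_\lambda$ and fills cell $(a,b)$ with the unique $k$ satisfying $(M_k)_{a,b}=1$. The proof therefore splits into two checks: the forward map lands in the stated product of sets with sum $J_\lambda$, and the inverse map produces a Latin tableau of the right type.

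\emph{Forward direction.} Every cell of $\lambda$ holds exactly one symbol and no entries lie outside $\lambda$, so $\sum_k M_k=J_\lambda$. Symbol $k$ appears at most once in each row and each column of $T$, so each row and column of $M_k$ has at most one $1$. It remains to check which rows of $M_k$ are nonzero. Row $a$ of $T$ contains exactly the symbols $1,\dots,\lambda_a$, so row $a$ of $M_k$ is nonzero if and only if $k\le\lambda_a$. Since $\lambda$ is weakly decreasing, this happens if and only if $a\le\rr_k(\lambda)$. Hence the first $\rr_k(\lambda)$ rows of $M_k$ each contain a $1$ and the remaining rows are zero, so $M_k\in S_{\ell(\lambda)\times\lambda_1}(\rr_k(\lambda))$.

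\emph{Inverse direction.} Since the $M_k$ are $(0,1)$-matrices summing to $J_\lambda$, each cell of $\lambda$ is covered by exactly one $M_k$ and no position outside $\lambda$ is covered, so the filling is well defined on exactly the shape $\lambda$. Each $M_k$ has at most one $1$ per row and per column, so no symbol repeats in a row or a column. By the definition of $S(\rr_k(\lambda))$, symbol $k$ appears in row $a$ if and only if $a\le \rr_k(\lambda)$, i.e.\ if and only if $\lambda_a\ge k$. So row $a$ contains exactly the symbols $\{1,\dots,\lambda_a\}$, and by a counting argument each appears once, since the row has $\lambda_a$ cells. Counting occurrences of $k$ then gives $\rr_k(\lambda)=\lambda'_k$, so the type is $\lambda'$. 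The two maps are clearly mutually inverse.

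The main subtlety is the role of the hypothesis $\lambda_1\ge\ell(\lambda)$ and of indexing by $\lambda_1$ symbols. The symbols used are exactly $1,\dots,\lambda_1$, so the sequence has the right length. The hypothesis guarantees that $S_{\ell(\lambda)\times\lambda_1}(t)$ is a sensible set for every $t\le\ell(\lambda)$: there are enough columns to place $t$ nonzero entries in distinct columns. Beyond checking this, I expect everything to reduce to the direct verification above. The one point to state carefully is that the ``first $t$ rows are nonzero'' condition in $S(t)$ encodes exactly the row-content requirement of a Latin tableau of type $\lambda'$.
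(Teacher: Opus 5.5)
Your proof is correct and uses the same encoding as the paper: $M_k$ records the positions of symbol $k$, and the row condition defining $S_{\ell(\lambda)\times\lambda_1}(\rr_k(\lambda))$ matches the fact that symbol $k$ occupies exactly rows $1,\dots,\rr_k(\lambda)$. The paper only sketches the forward direction, so your explicit check of the inverse map, including that the type comes out as $\lambda'$ because $\rr_k(\lambda)=\lambda'_k$, completes the bijection; note also that the hypothesis $\lambda_1\ge\ell(\lambda)$ is never actually used in the argument.
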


\begin{proof}
    Let $T$ be a Latin tableau of shape $\lambda = (\lambda_1, \ldots, \lambda_{\ell})$. The entries in $T$ are given exactly by the set $\{1, 2, \ldots, \lambda_1\}$. The rows that will contain an entry $i$ are exactly rows $1$ through $\rr_i(\lambda)$.  Thus, we can encode the positions occupied of $i$ in $T$ by a matrix $M_{i} \in S_{\ell(\lambda) \times\lambda_1}(\rr_i(\lambda))$ that is entry-wise less than or equal to $J_{\lambda}$. Since each position in $\lambda$ must be occupied, the sum of these matrices will yield $J_{\lambda}$.  
\end{proof}

Let $\mathfrak{S}_m^k$ be the set of $k$-permutations of $[m]$.
Define the $k$-row permanent of an $\ell \times m$ matrix $A$ to be 

\[ per_k(A) = \sum_{i_1, \ldots, i_k\in \mathfrak{S}_m^k} a_{1,i_1}a_{2,i_2} \cdots a_{k,i_k}   \]

This is equivalent to taking the permanent but only considering the first $k$ rows of the matrix.

\begin{theorem}\label{thm:main} For a partition $\lambda = (\lambda_1, \ldots, \lambda_{\ell})$ with $\lambda_1 \geq \ell$, let $B(\lambda)$ be the set of $\ell \times \lambda_1$ $(0,1)$-matrices entrywise less than or equal to $J(\lambda)$, and let $\sigma^{\lambda}_0(A)$ be the number of zero elements of the matrix $A$ whose indices occur in $\lambda$. Then, the number of Latin tableaux of shape $\lambda$ is given by 
    \[LT(\lambda) = \sum_{A \in B(\lambda)} (-1)^{\sigma^{\lambda}_0(A)}\prod_{i \in [\lambda_1]}per_{\rr_i(\lambda)}(A)\]
\end{theorem}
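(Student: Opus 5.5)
Our plan is to combine the bijection in the preceding lemma with inclusion--exclusion over the support of $J_\lambda$, in the style of Shao--Wei. By the lemma, $LT(\lambda)$ counts sequences $(M_1,\ldots,M_{\lambda_1})$ with $M_k\in S_{\ell\times\lambda_1}(\rr_k(\lambda))$ and $\sum_k M_k=J_\lambda$.

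The first step is to show that this condition is equivalent to requiring that each $M_k\le J_\lambda$ entrywise and that $\sum_k M_k$ has a positive entry in every cell of $\lambda$. The key is a counting argument. Each $M_k$ has exactly $\rr_k(\lambda)=\lambda'_k$ ones. Hence the total number of ones is $\sum_k\lambda'_k=|\lambda|$, which equals the number of cells of $\lambda$. If every cell of $\lambda$ is covered and no one lies outside $\lambda$, then each cell is covered exactly once, so the sum is $J_\lambda$. Consequently $LT(\lambda)$ is the number of tuples of matrices supported inside $\lambda$ that cover every cell of $\lambda$.

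The second step is inclusion--exclusion over the set $Z$ of cells of $\lambda$ that are forced to be uncovered. Write $A$ for the $(0,1)$-matrix equal to $J_\lambda$ with the cells of $Z$ set to $0$. Then $A$ ranges over $B(\lambda)$, and $|Z|=\sigma_0^\lambda(A)$. This gives
\[ LT(\lambda)=\sum_{A\in B(\lambda)}(-1)^{\sigma_0^\lambda(A)}\,N(A), \]
where $N(A)$ is the number of tuples in which every $M_k$ is supported within the ones of $A$. The tuple conditions decouple across $k$, so $N(A)=\prod_k N_k(A)$. Here $N_k(A)$ counts the matrices in $S_{\ell\times\lambda_1}(\rr_k(\lambda))$ supported on $A$. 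Such a matrix consists of a choice of distinct columns $i_1,\ldots,i_{\rr_k}$ for rows $1,\ldots,\rr_k$ with $a_{r,i_r}=1$, so $N_k(A)=per_{\rr_k(\lambda)}(A)$. This yields the formula.

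The main obstacle is the first step, namely justifying that "cover every cell" suffices rather than needing "cover exactly once". The counting identity $\sum_k\rr_k(\lambda)=|\lambda|$ handles this, but we must also check that $per_k$ is well defined over injections $[k]\to[\lambda_1]$. The hypothesis $\lambda_1\ge\ell$ guarantees that such injections exist, and it also makes the matrix dimensions consistent with the lemma. Finally, entries of $A$ outside $\lambda$ are already zero because $A\le J_\lambda$, so they do not contribute to the sign. This is why the exponent $\sigma_0^\lambda$ counts only the zeros inside $\lambda$.
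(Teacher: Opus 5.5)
Your proposal is correct and follows essentially the same route as the paper. Both arguments use the bijection lemma, then the observation that the $M_k$ contain $\sum_k \rr_k(\lambda)=|\lambda|$ ones in total, so that covering every cell of $\lambda$ with support inside $\lambda$ forces $\sum_k M_k=J_\lambda$, and then inclusion--exclusion over the set of uncovered cells, with each factor counted by $per_{\rr_k(\lambda)}(A)$; the only quibble is that $\lambda_1\ge\ell$ is not needed for $per_k$ to be well defined, since an empty sum is simply $0$, but this does not affect your argument.
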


\begin{proof} Consider the sets \[S(\lambda) = \{(M_1 , M_2, \ldots, M_{\lambda_1}) \in S_{\ell \times \lambda_1}(\rr_1(\lambda)) \times S_{\ell \times\lambda_1}(\rr_2(\lambda)) \times \cdots \times S_{\ell \times\lambda_1}(\rr_{\lambda_1}(\lambda)) : M_i \leq J_{\lambda}, i \in [\lambda_1] \},\] 
 \[\text{ and } S(\lambda)_{ij} = \{(M_1, M_2, \ldots, M_{\lambda_1})\in S(\lambda) : (M_1)_{ij} = \cdots = (M_{\lambda_1})_{ij} = 0\} \text{ for } (i,j)\in \lambda,\]  while $\overline{S(\lambda)_{ij}} = S(\lambda) \setminus S(\lambda)_{ij}$.
    By our bijection above, a Latin tableau of shape $\lambda = (\lambda_1, \ldots, \lambda_{\ell})$ corresponds to a sequence of matrices $(M_1, M_2, \ldots, M_{\lambda_1}) \in S(\lambda)$ satisying $M_1 + \cdots M_{\lambda_1} = J(\lambda)$. 
    Note that $M_1 + \cdots M_{\lambda_1} = J(\lambda)$ if and only if $(M_1)_{ij}, \cdots, (M_{\lambda_1})_{ij}$ are not all zero for each $(i,j)\in \lambda$. This is because the total sum of all entries in $M_1, \ldots, M_{\lambda_1}$ will be exactly $|\lambda|$ and our entries are all 1, so the only possible way for at least one matrix to have a non-zero entry for each $(i,j)\in \lambda$ is if exactly one matrix has a 1 in position $(i,j)$ for each $(i,j)\in \lambda$.
    Next, $(M_1)_{ij}, \cdots, (M_{\lambda_1})_{ij}$ are not all zero for each $(i,j)\in \lambda$ is true if and only if $(M_1, \ldots, M_{\lambda_1})\in \cap_{(i,j)\in \lambda} \overline{S(\lambda)_{ij}}$ by definition.  
    Therefore, \[LT(\lambda) = |\bigcap_{(i,j)\in \lambda} \overline{S(\lambda)_{ij}}|.\]
    Given a subset $X \subseteq \lambda$, let $S_X(\lambda) = \cap_{(i,j)\in X} S(\lambda)_{ij}$.
    By the principle of inclusion-exclusion, 
    \[LT(\lambda) = \sum_{X \subseteq \lambda} (-1)^{|X|}|S_X(\lambda)|.\] It remains to find a fomula for $|S_X(\lambda)|$. 
    
    Let $A(X)$ be the $(0,1)$-matrix where $A(X)_{ij}=0$ if $(i,j)\in X$ or $(i,j)\not\in \lambda$ and $A(X)_{ij}=1$ otherwise.  The set $S_X(\lambda)$ is exactly the set of $\lambda_1$-tuples of matrices where each matrix is less than $J(\lambda)$ and  the $i^{\text{th}}$ entry $M_i \in S_{\ell \times \lambda_1}(\rr_i(\lambda))$ has 0s at each $(i,j)\in X$. Further, since $M_i \in S_{\ell \times\lambda_1}(\rr_i(\lambda))$, it will be a matrix with a single 1 in rows $1$ through $\rr_i$ and only zeroes in all other rows, with at most one 1 per column. So, these $M_i$ correspond to ways of picking the $\rr_i$ entries that will be equal to 1 while making sure to avoid indices in $X$.
    Thus, the number of possible $M_i$ matrices for an element $(M_1, \ldots, M_{\lambda_1}) \in S_X(\lambda)$ is given by $per_{\rr_i(\lambda)}(A(X))$ for each $i \in [\lambda_1]$.  Therefore, \[|S_X(\lambda)| = \prod_{i \in [\lambda_1]} per_{\rr_i(\lambda)}(A(X)).\]
    This yields the formula \[LT(\lambda) = \sum_{X \subseteq \lambda} (-1)^{|X|}\prod_{i \in [\lambda_1]} per_{\rr_i(\lambda)}(A(X))\] which simplifies to the stated result. 
\end{proof}

Let $LT(n) = \sum_{n \vdash \lambda} LT(\lambda)$. The values of $LT(n)$ for $n \geq 1$ are:
\[1,2,7,30,144,857,5874,46342,410216,\dots\] 

Our formula can also be applied to rectangle and square shapes.

\begin{corollary}
    For $\lambda = (n)^k$ with $n \geq k$, our formula specializes to Formula (14) for the number of Latin rectangles in Remark 2 of \cite{SHAO1992293}:
    \[LT((n)^k) = \sum_{A \in B_{k \times n} }(-1)^{\sigma_0(A)}(per(A))^n.\]
    When $n=k$, this becomes Formula (15) for the number of Latin squares in Remark 2 of \cite{SHAO1992293}.
\end{corollary}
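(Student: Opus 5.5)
Specializing Theorem \ref{thm:main} to $\lambda=(n)^k$ with $n\ge k$ amounts to identifying each ingredient of the general formula with its rectangular counterpart.

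First, the shape data. Since every row has length $n$, $J_\lambda$ is the $k\times n$ all-ones matrix. Hence $B(\lambda)$, the set of $(0,1)$-matrices entrywise bounded by $J_\lambda$, is all of $B_{k\times n}$. Every index $(i,j)$ of a $k\times n$ matrix lies in $\lambda$, so $\sigma_0^\lambda(A)=\sigma_0(A)$. For each $i\in[n]$, every row has length at least $i$, so $\rr_i(\lambda)=k$.

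Second, the permanents. For each $i$ we have $per_{\rr_i(\lambda)}(A)=per_k(A)$. Because $A$ has exactly $k$ rows, $per_k(A)$ sums over all $k$-permutations of $[n]$ of $a_{1,i_1}\cdots a_{k,i_k}$. This is precisely the permanent of a rectangular $k\times n$ matrix with $k\le n$, which is the convention Shao--Wei use in Formula (14). The product over $i\in[n]$ therefore becomes $(per(A))^n$, and the corollary's formula follows by direct substitution.

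For $n=k$, the same substitution gives the sum over $B_n$ of $(-1)^{\sigma_0(A)}per(A)^n$, which is Formula (15). A Latin tableau of square shape $(n)^n$ and type $(n)^n$ is exactly an $n\times n$ Latin square, so this formula counts $L_n$, as Shao--Wei's does.

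The only step needing care is the permanent convention: the definition $per_k$ above must agree with Shao--Wei's rectangular permanent, meaning ordered injective choices of columns, one per row. Everything else is immediate bookkeeping, and the hypothesis $n\ge k$ is exactly what Theorem \ref{thm:main} requires, since $\lambda_1=n\ge k=\ell$.
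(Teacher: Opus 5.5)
Your proposal is correct and is the intended argument. The paper gives no separate proof; it treats the corollary as the direct substitution $B(\lambda)=B_{k\times n}$, $\sigma^{\lambda}_0=\sigma_0$, and $\rr_i(\lambda)=k$ for all $i\in[n]$ into Theorem \ref{thm:main}, so the product of the $per_k(A)$ collapses to $(per(A))^n$, with $per_k$ on a $k\times n$ matrix being the usual rectangular permanent, as you note.
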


We also separately consider enumeration of certain special cases. 
\begin{proposition} For $n,k \in \mathbb{N}$ with $0 < k \leq n$, \[LT((n,k)) = k! \sum_{i=0}^k (-1)^i \binom{k}{i}(n-i)!\]
    
\end{proposition}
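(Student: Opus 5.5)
Fix the first row of the tableau, then count the second row as a derangement-type arrangement against the first row using inclusion--exclusion.

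The first row of a Latin tableau of shape $(n,k)$ and type $(n,k)'$ is an arbitrary permutation of $[n]$ placed in columns $1,\dots,n$. The second row must use exactly the symbols $1,\dots,k$ in columns $1,\dots,k$, with one constraint: the entry in column $j$ differs from the first-row entry $a_j$ in column $j$. So
\[LT((n,k))=\sum_{a\in \mathfrak{S}_n} N(a),\]
where $N(a)$ counts bijections $\beta:[k]\to[k]$ (column $j\mapsto$ symbol) with $\beta(j)\neq a_j$ for all $j\le k$. Only constraints with $a_j\in[k]$ actually bite.

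It is cleaner to fix the second row first and count first rows. Fix the bijection $\beta$ ($k!$ choices). The first row is a permutation $a$ of $[n]$ with $a_j\neq \beta(j)$ for $j\in[k]$, and no condition on columns $k+1,\dots,n$. These $k$ forbidden positions $(j,\beta(j))$ lie in distinct rows and distinct columns of the $n\times n$ permutation board.

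Apply inclusion--exclusion over the set $I\subseteq[k]$ of forbidden positions that are violated. Since the positions are non-attacking, the number of permutations of $[n]$ with $a_j=\beta(j)$ for all $j\in I$ is $(n-|I|)!$. Hence the count of valid first rows is $\sum_{i=0}^k(-1)^i\binom{k}{i}(n-i)!$, which does not depend on $\beta$. Multiplying by $k!$ gives the formula.

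The only point needing care is confirming that the type condition forces the second row to be exactly a permutation of $[k]$ and the first a permutation of $[n]$. Since $(n,k)'$ has $\lambda'_s=2$ for $s\le k$ and $1$ otherwise, the entries $k+1,\dots,n$ appear once each, and each of $1,\dots,k$ appears twice, once in each row. This also matches the Latin tableau definition in the paper, where row $i$ contains exactly $1,\dots,\lambda_i$. No real obstacle remains; one could alternatively check the result against Theorem~\ref{thm:main}, but the direct count is simpler.
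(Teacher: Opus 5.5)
Your proposal is correct and takes essentially the same approach as the paper: fix the second row as a permutation of $[k]$, then use inclusion--exclusion over the $k$ non-attacking forbidden positions to count first-row permutations of $[n]$, obtaining $\sum_{i=0}^k(-1)^i\binom{k}{i}(n-i)!$ independently of the second row, and multiply by $k!$. Your explicit check that the type $(2^k,1^{n-k})$ forces row contents $[n]$ and $[k]$ is a detail the paper leaves implicit, and it is handled correctly.
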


\begin{proof} 

Consider a filling of the second row of $\lambda$, which can be associated with a permutation of length $k$. Given this permutation $\sigma$, we count all permutations $\pi$ of length $n$ that can be used to fill the top row of $\lambda$ and create a Latin tableau. These are exactly the permutations where $\pi_i \not = \sigma_i$ for all $i\in[k]$. We will denote this set $L_k$.
 
 Let $\sigma$ be the identity permutation $\sigma = (1,2, \ldots, k)$.
Let $N_i$ be the set of permutations $\pi$ of length $n$ where $\pi_i=i$. It is clear that if $\pi \in N_i$ for any $i\in [k]$, then $\pi \not \in L_k$. Moreover, any $\pi \not\in L_k$ must be in $N_i$ for some $N_i$. So, $L_k = S_n \setminus (N_1 \cup N_2 \cup \cdots \cup N_k)$.

By the principle of inclusion-exclusion, we have 
\[   |N_1 \cup N_2 \cup \cdots \cup N_k| = \sum_{i_1 \in [k]} |N_{i_1}| - \sum_{i_1 > i_2 \in [k]} |N_{i_1} \cap N_{i_2}| +  \cdots  + (-1)^k \sum_{i_1 > \cdots > i_k \in [k] }|N_{i_1} \cap N_{i_2} \cap \cdots \cap N_{i_k}|.\]
  Observe that $|N_{i_1} \cap N_{i_2} \cap \cdots \cap N_{i_j}| = (n-j)!$ and there are $\binom{k}{j}$ choices of $i_1, i_2, \ldots, i_j$. Therefore, \begin{align*}
    |N_1\cup\cdots \cup N_k| = \binom{k}{1}(n-1)! - \binom{k}{2}(n-2)! + \binom{k}{3}(n-3)! + \cdots + \binom{k}{k}(n-k)!
\end{align*} 

To find $|L_k|$ we do $|S_n| - |N_1 \cup N_2 \cup \ldots, \cup N_k|$. Noting that $n! = \binom{k}{0}n!$, this simplifies to \begin{align*}
    n! - |N_1 \cup \cdots \cup N_k| & = n! - \Big(\binom{k}{1}(n-1)! - \binom{k}{2}(n-2)! + \cdots + \binom{k}{k}(n-k)!\Big)
    \\ & = \binom{k}{0}(n-0)! - \binom{k}{1}(n-1)! + \binom{k}{2}(n-2)!  + \cdots (-1)^k\binom{k}{k}(n-k)!
    \\ & = \sum_{i=0}^k (-1)^i\binom{k}{i}(n-i)!
\end{align*}

This counts the number of Latin tableaux of shape $(n,k)$ where the second row is filled with $\sigma = (1,2,\ldots, k)$. This process is independent of $\sigma$, so we multiply by a factor of $k!$ to account for all permutations. 
\end{proof}

Next, we consider staircase shapes and staircase shapes with the top row extended.

\begin{proposition}
    There exists a single Latin tableau of shape $\lambda$ and type $\lambda'$ if and only if $\lambda$ is the staircase shape $(k, k-1, \ldots,2,1)$ for some $k \in \N$.
\end{proposition}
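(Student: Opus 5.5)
We prove the two directions separately.

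\emph{Staircases have exactly one Latin tableau.} For $\lambda=(k,k-1,\ldots,1)$ we induct on columns from the right. Row $i$ has length $k-i+1$ and must contain exactly $\{1,\ldots,k-i+1\}$. Column $k-i+1$ meets rows $1,\ldots,i$. The last cell of row $i$ lies in column $k-i+1$, and the column heights are $\lambda'_j=k-j+1$.

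It is cleaner to argue by the largest symbol. The symbol $s$ appears exactly $\lambda'_s=k-s+1$ times, and only in rows $1,\ldots,k-s+1$ (those of length at least $s$). It therefore occurs once in each of these rows, so symbol $k$ appears only in row $1$. Now induct downward on $s$ to show that $s$ occupies column $k-s+1-i+1$ in row $i$ (that is, $T(i,j)=\lambda_i-j+1$).

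Equivalently, work column by column from the right. Column $k$ has the single cell $(1,k)$. Column $k-1$ has cells $(1,k-1),(2,k-1)$, where row $2$ has length $k-1$.

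A cleaner route is to peel rows. The last row is $(1)$, which forces $T(k,1)=1$. Row $k-1$ is $(a,b)$ with $\{a,b\}=\{1,2\}$, and column $1$ already contains $1$, so $a=2,b=1$. In general, suppose rows $i+1,\ldots,k$ are forced to be decreasing, so column $j$ below row $i$ contains exactly the values $\lambda_{r}-j+1$ for $r>i$, i.e. $\{1,\ldots,k-i+1-j\}$. Row $i$ has length $L=k-i+1$, and in column $j$ its entry must avoid $\{1,\ldots,L-j\}$. The entry is therefore at least $L-j+1$. Since column $L$ has no cells below, its entry is at least $1$.

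Assign to each cell $j$ a value $\ge L-j+1$ bijectively onto $\{1,\ldots,L\}$. Column $1$ must get a value $\ge L$, hence $L$. Column $2$ must get a value $\ge L-1$, and $L$ is used, so it gets $L-1$, and so on. This forces row $i$ to be $(L,L-1,\ldots,1)$. The induction closes, and the filling $T(i,j)=\lambda_i-j+1$ is unique. It is Latin by the strict-partition construction in Section~\ref{sec:pre}.

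\emph{Uniqueness forces a staircase.} Suppose $\lambda$ has exactly one Latin tableau $T$. Then $\lambda$ is Latin, hence wide, so $\lambda_{\ell-i}>i$, i.e. $\lambda$ contains the staircase $\delta_\ell=(\ell,\ldots,1)$. If $\lambda\neq\delta_\ell$, we produce a second tableau. The idea is to find a nontrivial symmetry.

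Consider first the case where some row $r$ has $\lambda_r$ strictly larger than needed. The simplest move is to take two symbols $a<b\le\lambda_{\ell}$... This does not work directly. Instead, use the conjugation-free involution: permute symbols by a transposition $(a\,b)$ with $\lambda'_a=\lambda'_b$. This preserves the type and the row condition whenever every row containing $a$ also contains $b$, which holds when $a,b\le\lambda_\ell$, or more generally when no row has length in $[a,b)$. Here $\lambda'_a=\lambda'_b$ means no row has length exactly in $\{a,\ldots,b-1\}$.

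If $\lambda\ne\delta_\ell$ and is wide, the row lengths are not exactly $\{1,\ldots,\ell\}$, and since $\lambda_\ell\ge1$ there is some consecutive pair $a,a+1\le\lambda_1$ with no row of length $a$. Indeed, the $\ell$ row lengths cover at most $\ell$ values among $\{1,\ldots,\lambda_1\}$, so if $\lambda_1>\ell$ some $a<\lambda_1$ is missed. If $\lambda_1=\ell$, wideness together with $\lambda$ containing $\delta_\ell$ and $\lambda\ne\delta_\ell$ forces a repeated length, so again some $a<\ell$ is missed. Applying $(a\;a{+}1)$ to $T$ gives another Latin tableau of the same shape and type. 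It differs from $T$ because symbol $a$ actually appears. This contradicts uniqueness.

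The main obstacle is the counting argument that some length $a<\lambda_1$ is missing. It must handle the repeated-length case carefully, using $\lambda\supseteq\delta_\ell$ so that $\ell\le\lambda_1$.
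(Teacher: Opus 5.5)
Your proof is correct. The first direction is the paper's bottom-up forcing argument, made rigorous by your observation that the cell in column $j$ of row $i$ must exceed $L-j$ (the values below it are $\lambda_r-j+1$ for those $r>i$ with $\lambda_r\ge j$).

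For the converse, your route differs from the paper's.

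\begin{itemize}
\item The paper notes that $\lambda$ is a staircase exactly when $\lambda$ and $\lambda'$ are both strict. A non-staircase therefore has a repeated row or a repeated column, and swapping two equal-length rows or columns yields a second tableau.
\item You first use Latinness to get $\lambda_1\ge\ell$. You then show by counting that some row length $a<\lambda_1$ is missing, which is equivalent to $\lambda'_a=\lambda'_{a+1}$, i.e.\ a repeated column. Finally you apply the symbol transposition $(a\;a{+}1)$.
\end{itemize}

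Your version needs the extra input $\lambda_1\ge\ell$, but it reduces to a single case. It also makes explicit why the new tableau differs from $T$: the symbol $a$ actually occurs. The paper leaves this point implicit; the swapped rows or columns cannot coincide because their first entries share a column or row.

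The paper's version is shorter and symmetric in rows and columns, and it needs no wideness.

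In a final write-up you should delete the abandoned false starts: the largest-symbol induction, the column-by-column attempt, and the remark about $a<b\le\lambda_\ell$.
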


\begin{proof}
    Consider a staircase shape $\lambda = (n-1,n-2, \ldots, 2,1)$. To create a Latin tableau of type $\lambda'$, one is forced to fill the bottom row with a 1. This forces the next row to be filled with a 2 and then a 1 (left to right), which in turn forces the next row to be filled with 3,2, and 1. Continuing in this way gives a Latin filling of $\lambda$ which must be unique.

    For the other direction, first observe that $\lambda$ is a staircase if and only if both $\lambda$ and $\lambda'$ are strict. Thus, if $\lambda$ is not a staircase, then there is either a repeated row or a repeated column. If any Latin filling of type $\lambda'$ exists, there will be at least one more filling obtained by switching those rows or columns of the same length. 
\end{proof}

\begin{proposition}
    If $\lambda$ is a partition of shape $(k+t,k-1,k-2, \ldots, 3,2,1)$ with $t \geq 0$, then \[LT(\lambda) = t!(t+1)^k.\]
\end{proposition}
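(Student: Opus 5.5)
The plan is to show that everything below the first row is forced, and then count the admissible first rows directly.

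Write $\lambda=(k+t,k-1,\ldots,2,1)$, so that row $i$ has length $\lambda_i=k-i+1$ for $2\le i\le k$.

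\textbf{Step 1: the lower rows are forced.} Rows $2$ through $k$ form a staircase. In the uniqueness argument for staircases in the preceding proposition, the bottom row is forced to be $1$, which forces the next row to be $2,1$, and so on. That argument uses only two facts: each row $i$ contains exactly $\{1,\ldots,\lambda_i\}$, and entries in a column are distinct among these rows. Adding a longer first row does not change either fact for rows $2,\ldots,k$. So in any Latin tableau of shape $\lambda$ and type $\lambda'$, cell $(i,j)$ with $i\ge 2$ holds $\lambda_i-j+1=k-i-j+2$.

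\textbf{Step 2: the constraint this places on the first row.} For $1\le j\le k-1$, column $j$ contains cells in rows $2,\ldots,k-j+1$. By Step 1 these hold the values $k-j, k-j-1,\ldots,1$, that is, exactly $\{1,\ldots,k-j\}$. Columns $j\ge k$ contain only the first-row cell.

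The first row must be a permutation $\pi$ of $[k+t]$. The only remaining condition is $\pi(j)\in\{k-j+1,\ldots,k+t\}$ for $1\le j\le k-1$. So $LT(\lambda)$ equals the number of such permutations.

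\textbf{Step 3: counting.} The allowed set for column $j$ has size $t+j$, and these sets are nested, increasing in $j$. We choose entries in the order $j=1,2,\ldots,k-1$. When we reach column $j$, the $j-1$ values already used all lie in column $j$'s allowed set, because they came from smaller allowed sets. So column $j$ has exactly $(t+j)-(j-1)=t+1$ choices, and the first $k-1$ columns contribute $(t+1)^{k-1}$. The remaining $t+1$ values fill columns $k,\ldots,k+t$ freely, in $(t+1)!$ ways. Hence
\[LT(\lambda)=(t+1)^{k-1}(t+1)!=t!\,(t+1)^k.\]

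The main point needing care is Step 1: we must confirm that the forcing argument for the staircase really does not depend on the first row. This holds because each forcing step in that argument looks only at rows below the one being filled, plus that row's own content. The nested-sets count in Step 3 is routine once the column constraints are written down explicitly.
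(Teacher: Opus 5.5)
Your proof is correct and follows the paper's argument. You force the staircase rows $2,\dots,k$, then count admissible first rows greedily using the nested column constraints. The only difference is bookkeeping: you split the count as $(t+1)^{k-1}\cdot(t+1)!$, while the paper also gives column $k$ exactly $t+1$ choices and writes $(t+1)^k\cdot t!$, which is the same number.
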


\begin{proof}
    The bottom $k-1$ rows of $ \lambda$ form a staircase of size $k-1$ and thus box $(i,j)$ for $i>1$ must be filled with $\lambda_i-j+1$. It remains to count the number of ways to fill the first row. The first entry of row 1 could be any element of $\{k,k+1, \ldots, k+t\}$. The second entry of row 1 could be any element of $\{k-1,k,k+1, \ldots, k+t\}$ except for whichever element fills the first entry of row 1. In this way, there are $(t+1)$ possible ways to fill each of the first $k$ cells of row $1$, giving a total of $(t+1)^k$ fillings.  The remaining $t$ cells must be filled with the remaining $t$ elements, which can be ordered in $t!$ ways. 
\end{proof}

We may also understand the number of all Latin tableaux in terms of the number of \emph{reduced Latin tableaux}.
We introduce two different notions of reduced Latin tableaux to generalize reduced Latin squares \cite{cayley1890Latin, macmahon1915combinatory}. 

\begin{definition}
    A Latin tableau is \emph{column-reduced} if the top-most entries among each set of columns of the same length increase from left to right. A Latin tableau is \emph{row-reduced} if the leftmost entries among each set of rows of the same length increase from top to bottom.
\end{definition}

Let $RLT^{col}(\lambda)$ and $RLT^{row}(\lambda)$  denote the number of column-reduced and row-reduced Latin tableaux of shape $\lambda$, respectively. 

\begin{proposition}\label{prop:divis} Let $\lambda = (i_1^{a_1}, i_2^{a_2}, \ldots, i_k^{a_g} )$  be a partition with $\lambda' = (j_1^{b_1}, \ldots, j_h^{b_h})$. Then,

\[LT(\lambda) =b_1!b_2! \cdots b_h! RLT^{col}(\lambda)= a_1!a_2! \cdots a_g! RLT^{row}(\lambda).\]
\end{proposition}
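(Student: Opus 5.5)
The plan is a group action argument. Let $G_{\mathrm{col}}=\mathfrak{S}_{b_1}\times\cdots\times\mathfrak{S}_{b_h}$, where the factor $\mathfrak{S}_{b_r}$ permutes the $b_r$ columns of $\lambda$ of common length $j_r$; these columns are consecutive in the diagram. Similarly, let $G_{\mathrm{row}}=\mathfrak{S}_{a_1}\times\cdots\times\mathfrak{S}_{a_g}$ permute the consecutive rows of common length $i_s$.

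The first step is to check that $G_{\mathrm{col}}$ acts on the set of Latin tableaux of shape $\lambda$ and type $\lambda'$. Permuting columns of equal length keeps the shape. It permutes the entries within each row, so row $i$ still contains exactly $1,\dots,\lambda_i$. Each column is moved intact, so no column acquires a repeated entry. Applying the same reasoning with rows and columns swapped, $G_{\mathrm{row}}$ also acts. Permuting rows of equal length preserves each row's content set $\{1,\dots,\lambda_i\}$, since that set depends only on the length. It also permutes the cells within each column, so column distinctness is kept.

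The second step, which I expect to be the main point to get right, is showing that each action is free. For columns, take two distinct columns of the same length in a Latin tableau $T$. Their top entries lie in row $1$, and row $1$ has distinct entries, so these top entries differ. Hence any nontrivial element of $G_{\mathrm{col}}$ moves some column to a different position with a different top entry, and so changes $T$. For rows, take two distinct rows of the same length. Their leftmost entries lie in column $1$, which has distinct entries, so they differ, and the same argument applies. It follows that every orbit has size exactly $|G_{\mathrm{col}}|=b_1!\cdots b_h!$, respectively $|G_{\mathrm{row}}|=a_1!\cdots a_g!$.

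The last step is to count orbits using the reduced tableaux. Within each block of equal-length columns, the top entries are distinct. Exactly one rearrangement of the columns in each block puts these top entries in increasing order from left to right, so each $G_{\mathrm{col}}$-orbit contains exactly one column-reduced tableau. Therefore $LT(\lambda)=b_1!\cdots b_h!\,RLT^{col}(\lambda)$. The same argument with leftmost entries of equal-length rows, ordered increasing from top to bottom, gives exactly one row-reduced tableau per $G_{\mathrm{row}}$-orbit, and hence $LT(\lambda)=a_1!\cdots a_g!\,RLT^{row}(\lambda)$.
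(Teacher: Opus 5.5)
Your proof is correct and follows essentially the same route as the paper. Both arguments permute equal-length columns (resp.\ rows), observe that all $b_1!\cdots b_h!$ (resp.\ $a_1!\cdots a_g!$) rearrangements give distinct Latin tableaux, and note that each orbit contains exactly one reduced tableau; you also check explicitly that the action preserves Latin tableaux and justify freeness by the distinctness of the entries in row $1$ (resp.\ column $1$), which the paper compresses into the remark that no two columns are the same.
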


\begin{proof} 
Consider a column-reduced Latin tableau $R$ of shape $\lambda$ with $\lambda' = (j_1^{b_1}, \ldots, j_h^{b_h})$. There are $b_1!b_2!\cdots b_h!$ permutations of the columns of $R$ to obtain a Latin tableau of shape $\lambda$. No two columns are the same, so each of these permutations is unique. Additionally, every Latin tableau is a permutation of a unique column-reduced Latin tableau. This yields the first part of our result. Applying similar logic to the row-reduced tableaux yields the second part. 
\end{proof}

\section{Comparing values of $LT(\lambda)$}\label{sec:mono}

 In cases where we can show that $LT(\lambda) \geq LT(\mu)$, the problem of proving that $\lambda$ is Latin reduces to one of proving that $\mu$ is Latin. We take this approach for certain types of wide partitions here. 
Specifically, we show $LT(\lambda)\geq LT(\mu)$ by proving that each Latin filling of $\mu$ corresponds to at least one Latin filling of $\lambda$.  

\begin{proposition}
    If $\lambda$ and $\mu$ are wide partitions where $(\lambda, 1) = \mu$, then $LT(\lambda) \geq LT(\mu)$.
\end{proposition}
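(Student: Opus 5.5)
The plan is to build an injective map from the Latin tableaux of $\mu=(\lambda,1)$ to those of $\lambda$, as the opening of this section suggests. Write $\ell=\ell(\lambda)$, so $\mu$ has $\ell+1$ rows, the last consisting of the single cell $(\ell+1,1)$. The map is restriction: send a Latin tableau $T$ of shape $\mu$ and type $\mu'$ to the filling $T|_\lambda$ obtained by deleting the cell $(\ell+1,1)$.

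\textbf{Step 1: the image is a Latin tableau of $\lambda$.} By the description of Latin tableaux of shape $\lambda$ and type $\lambda'$ (row $i$ contains exactly $1,\ldots,\lambda_i$ and no column repeats an entry), this is purely local. For $i\le \ell$, row $i$ of $T|_\lambda$ is unchanged from row $i$ of $T$. Since $\mu_i=\lambda_i$ for these rows, row $i$ still contains exactly $1,\ldots,\lambda_i$. Deleting a cell cannot create a repeat within a column. So $T|_\lambda$ is a Latin tableau of shape $\lambda$. Its type is automatically $\lambda'$, since each value $k$ appears once in each row of length at least $k$, and that count is $\lambda'_k$. Equivalently, $\mu'$ and $\lambda'$ differ only in the count of $1$'s, and exactly one $1$ was removed.

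\textbf{Step 2: injectivity.} In any Latin tableau of shape $\mu$ and type $\mu'$, the last row has length $\mu_{\ell+1}=1$, so it must contain exactly the entry $1$. Hence $T$ is recovered from $T|_\lambda$ by placing a $1$ in cell $(\ell+1,1)$, and the restriction map is injective. This gives $LT(\lambda)\ge LT(\mu)$.

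I do not expect a real obstacle: the wideness hypotheses are not needed for the inequality, and they only matter for the intended use that Latinness of $\mu$ implies Latinness of $\lambda$. The one point to state carefully is Step 1, namely that restricting a Latin tableau to a shape obtained by deleting a whole row again gives a Latin tableau of the correct type. The same observation already appears in Reiner's argument recalled in the introduction.
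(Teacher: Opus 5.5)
Your proposal is correct and is essentially the paper's argument: delete the single-box last row, which is forced to contain $1$, and note that this restriction to Latin tableaux of shape $\lambda$ is injective. The paper also remarks that the image consists of tableaux of shape $\lambda$ with no $1$ in the first column (it is in fact exactly that set), but this refinement is not needed for the inequality.
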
 

\begin{proof}
Each Latin tableau of shape $\mu$ corresponds to a Latin tableau of $\lambda$ that does not contain 1 in the first column. This is injective because the bottom single-box row of any Latin tableau of shape $\mu$ must contain a 1, making the rest of the tableau one of shape $\lambda$ that does not have (another) 1 in the first column.
\end{proof}

\begin{proposition}
    Let $\lambda$ be a partition with $\lambda_1 \geq max\{\lambda_i+i-1 : 2 \leq i \leq \ell(\lambda)\}$, and let $\mu=(\lambda_2, \ldots, \lambda_{\ell})$. Every Latin tableau $T$ of shape $\mu$ is a lower subtableau of some Latin tableau of shape $\lambda$. 
\end{proposition}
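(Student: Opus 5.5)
The plan is to keep $T$ as rows $2$ through $\ell$ and build a first row of length $\lambda_1$ that contains each of $1,\dots,\lambda_1$ exactly once and creates no column conflict with $T$. Every row of the resulting tableau then has the right content: row $1$ contains $1,\dots,\lambda_1$, and rows $2,\dots,\ell$ are unchanged from the Latin tableau $T$, whose row $i$ contains $1,\dots,\lambda_i$. So the result is a Latin tableau of shape $\lambda$ and type $\lambda'$ with $T$ as its lower subtableau.

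\textbf{Which columns are constrained.} For $j\le \lambda_2$, column $j$ of $T$ has $\mu'_j$ entries, all distinct, and these are the only values forbidden in cell $(1,j)$. Columns $j>\lambda_2$ lie outside $\mu$ and impose no restriction. So the problem reduces to choosing distinct values for the cells $(1,1),\dots,(1,\lambda_2)$ that avoid the column entries below them. The $\lambda_1-\lambda_2$ unused values are then placed in columns $\lambda_2+1,\dots,\lambda_1$ in any order.

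\textbf{Greedy choice, left to right.} I fill $(1,j)$ for $j=1,2,\dots,\lambda_2$ in turn. When column $j$ is reached, $j-1$ values have already been used and $\mu'_j$ values are forbidden by column $j$ of $T$. A free value therefore exists as soon as $\lambda_1 > (j-1)+\mu'_j$, that is, $\lambda_1\ge j+\mu'_j$.

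\textbf{The counting step.} This inequality is where the hypothesis is needed. Set $k=\mu'_j$, so that $k\ge 1$. Then the $k$-th row of $\mu$, which is row $k+1$ of $\lambda$, has length at least $j$, i.e. $\lambda_{k+1}\ge j$. Applying the hypothesis with $i=k+1\ge 2$ gives
\[\lambda_1\ \ge\ \lambda_{k+1}+(k+1)-1\ \ge\ j+k = j+\mu'_j,\]
which is what the greedy step requires. So the greedy choice never gets stuck, and the construction goes through.

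The only real work is this inequality, which turns the condition $\lambda_1 \ge \max\{\lambda_i+i-1\}$ into room for the greedy choice. The rest is bookkeeping. I also expect that filling the columns from left to right, with the longest columns first, is exactly what makes the count $j-1$ line up with the bound $\lambda_{k+1}\ge j$.
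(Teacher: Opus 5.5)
Your proof is correct, and it takes a simpler route than the paper's. The paper models the new top row as a perfect matching between columns and values and verifies Hall's condition by contradiction. A deficient column set $I$ yields a set $M$ of values, each occurring in every column of $I$. Counting occurrences of $w=\max M$ gives $|I|\le \lambda'_w-1$ and $|M|\le w$, and the hypothesis applied at row $r=\lambda'_w$ then gives the contradiction.

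You instead observe that the hypothesis is already strong enough for a left-to-right greedy fill to succeed. At column $j$ at most $(j-1)+\mu'_j$ values are blocked. The needed inequality $\lambda_1\ge j+\mu'_j$ follows from the hypothesis at $i=\mu'_j+1$, which is a legitimate index since $\mu'_j\le \ell(\mu)=\ell-1$. Both proofs use the hypothesis through the same inequality, namely $\lambda_1+1\ge a+b$ for every cell $(a,b)$ of $\lambda$ below the first row. You apply it at the cell $(\mu'_j+1,j)$; the paper applies it at $(\lambda'_w,w)$.

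Your version avoids Hall's theorem and is explicitly algorithmic. It also gives a quantitative bound for free: the number of admissible top rows is at least $(\lambda_1-\lambda_2)!\prod_{j=1}^{\lambda_2}(\lambda_1-j+1-\mu'_j)$. So Corollary~\ref{cor:addrow} could be strengthened to $LT(\lambda)$ being at least this product times $LT(\mu)$.

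The Hall framework is the one that could in principle survive a weaker hypothesis, since Hall's condition is exact while greedy relies on a worst-case count. The paper's verification does not exploit that extra room. Your remark that processing the longest columns first is what makes the count work is also right: any other order can only make the greedy bound worse, because $\mu'$ is weakly decreasing.
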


\begin{proof}

Let $T$ be a Latin tableau of shape $\mu$. For each $i\in[\lambda_1]$, let $U_i$ be the set of values that appear in column $i$ of $T$ (note $U_i=\emptyset$ when $i>\lambda_2=\mu_1$).
Each possible row of length $\lambda_1$ that could be added to the top of $T$ corresponds to a permutation $\pi = \pi_1 \pi_2 \cdots \pi_{\lambda_1}$ where $\pi_i \not\in U_i$ for each $i \in [\lambda_1]$. Equivalently, this corresponds to a perfect matching in a bipartite graph $G$ with vertices $c_1, c_2, \ldots, c_{\lambda_1}$ corresponding to columns of the new row, vertices $1, 2, \ldots, \lambda_1$ corresponding to values that need to appear in the row, and edges $(c_i,j)$ if and only if $j\not\in U_i$. By Hall's Theorem \cite{Hall1935}, to prove a perfect matching exists, it suffices to show that for any set $I \subset [\lambda_1]$, we have $|\cup_{i \in I} N(c_i)| \geq |I|$, where $N(c_i)$ is the set of neighbors of $c_i$.

Assume for contradiction that there exists a set $I \subset [\lambda_1]$ such that $|\cup_{i \in I} N(c_i)| < |I|$. Note that $\cup_{i \in I} N(c_i) = \cup_{i \in I} ([\lambda_1] \setminus U_i)$. Let $M = [\lambda_1] \setminus \cup_{i \in I} N(c_i)$. Then, $|M| = \lambda_1 - |\cup_{i \in I} N(c_i)| > \lambda_1 - |I|$, so $|I|+|M| \geq \lambda_1 + 1$. 
By definition, every $v\in M$ has $v\in U_j$ for all $j \in I$ (meaning, $v$ occurs in each of the selected columns).
In $T$, a value $v$ occurs in row $i$ of $\mu$ exactly when $\mu_i \geq v$, or equivalently, $\lambda_{i+1} \geq v$. Thus, $v$ occurs exactly $|\{i\geq 2: \lambda_i \geq v\}| = \lambda'_v-1$, each in a distinct column.  Given that $v$ occurs in exactly $\lambda_v'-1$ columns, and it occurs in every column indexed by $I$, we must have $|I| \leq \lambda_v'-1$, for every $v\in M$.

Given that $|M| > \lambda_1 - |I|$, we have $\lambda_1 + 1 \leq |I| + |M|$. Let $w = max(M)$ and note $|M| \leq w$, then
\[\lambda_1+1 \leq |I|+|M| \leq \lambda_w'-1+w.\]
Let $r = \lambda_w'=|\{i : \lambda_i \geq w\}|$, meaning $\lambda_r \geq w$. Since $I$ is nonempty and $|I|\leq \lambda'_w-1$, we have $r \geq 2$. Then, by definition of $\lambda$, we have $\lambda_r \leq \lambda_1 - r +1$. Therefore, \[\lambda_w' + w \leq \lambda_r + r \leq \lambda_1+1 .\]
This contradicts our previous fact that $\lambda_1+1 \leq (\lambda_w'-1)+w$. So, Hall's condition holds, and a perfect matching exists, meaning there exists a row that, when added to $T$, creates a Latin tableau of shape $\lambda$.
\end{proof}

\begin{corollary}\label{cor:addrow}
If $\lambda = (m,\mu)$ where $m \geq max\{\mu_i+i :i \in [\ell(\mu)]\}$, then $LT(\lambda) \geq LT(\mu)$.
\end{corollary}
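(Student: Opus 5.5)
The plan is to derive Corollary~\ref{cor:addrow} from the preceding proposition by matching the hypotheses and then checking that the resulting extension map is injective.

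\textbf{Step 1: translate the hypothesis.} Set $\lambda=(m,\mu)$, so $\lambda_1=m$ and $\lambda_{i+1}=\mu_i$ for $i\in[\ell(\mu)]$. For $2\le i\le \ell(\lambda)$ we have
\[\lambda_i+i-1=\mu_{i-1}+(i-1).\]
Therefore
\[\max\{\lambda_i+i-1: 2\le i\le \ell(\lambda)\}=\max\{\mu_j+j: j\in[\ell(\mu)]\}\le m=\lambda_1.\]
In particular $m\ge \mu_1+1>\mu_1$, so $\lambda$ is a genuine partition. The hypothesis of the preceding proposition thus holds, and $(\lambda_2,\ldots,\lambda_\ell)=\mu$.

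\textbf{Step 2: build the map.} The preceding proposition says that every Latin tableau $T$ of shape $\mu$ and type $\mu'$ is the lower subtableau of some Latin tableau $\widehat T$ of shape $\lambda$. Here $\lambda$ has type $\lambda'$, and its first row contains exactly the entries $1,\ldots,m$. For each $T$ we choose one such $\widehat T$, for instance the lexicographically smallest valid top row, to get a map $\Phi:T\mapsto \widehat T$.

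\textbf{Step 3: check injectivity.} Deleting the first row of $\widehat T$ recovers $T$, so $\Phi$ is injective. The restriction to rows $2,\ldots,\ell$ is a Latin tableau of shape $\mu$ whose row $i$ contains exactly $1,\ldots,\mu_i$. Hence $LT(\lambda)\ge LT(\mu)$.

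The only point needing care is confirming that the subtableau obtained from rows $2$ onward is a Latin tableau of type $\mu'$. This holds automatically, because rows of a Latin tableau of type $\lambda'$ are the sets $[\lambda_i]$ and column distinctness is inherited. So there is no real obstacle; the work is just the index shift in Step~1.
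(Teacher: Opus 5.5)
Your proposal is correct and follows the paper's route. The paper derives the corollary directly from the preceding proposition by the same index shift $\lambda_i+i-1=\mu_{i-1}+(i-1)$, and the extension $T\mapsto\widehat T$ is injective because deleting the top row recovers $T$.
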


The partition $\mu=(5,4,3,3)$ is Latin by \cite[Thm 4]{chow2003wide} and has $\max(\mu_i+i)=7$. Then by Corollary \ref{cor:addrow}, $\lambda = (k, \mu)$ for any $k \geq 7$ is Latin. These partitions are not shown to be Latin as part of any other special family up to this point. 

Our second angle builds on the work in Section \ref{sec:LT} counting Latin tableaux with matrix permanents. We begin with a formula that leverages strategies for extending Latin rectangles from \cite{stones2010many}.
For a partition $\lambda$ and $r \in [\ell(\lambda)]$, let $\lambda^{(r)}$ be the partition obtained by deleting row $r$ of $\lambda$. Let $\mathcal{LT}(\lambda)$ denote the set of Latin tableaux of shape $\lambda$.
The permanent of an $n \times n$ matrix $A$ is defined as 
\[per(A) = \sum_{\sigma\in \mathfrak{S}_n} a_{1,\sigma(1)}a_{2,\sigma(2)} \cdots a_{n,\sigma(n)}.\]
Using permanents, we construct a recursive expression for the number of Latin tableaux of shape $\lambda$ in terms of Latin tableaux of $\lambda^{(r)}$.
\begin{lemma} For a partition $\lambda$ and any $r \in [\ell(\lambda)]$, 
\[LT(\lambda) = \sum_{T \in \mathcal{LT}(\lambda^{(r)})} per(A_{\lambda_r}(T)),\]
where $A_{\lambda_r}(T)$ is the availability matrix of $T$, the $(0,1)$-matrix of size $\lambda_r \times \lambda_r$ where $a_{ij} =1$ when $j$ does not occur in column $i$ of $T$.
\end{lemma}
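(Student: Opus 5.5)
The plan is to build a bijection between Latin tableaux of shape $\lambda$ and pairs $(T,\pi)$, where $T$ is a Latin tableau of shape $\lambda^{(r)}$ and $\pi$ is a permutation of $[\lambda_r]$ avoiding the entries already in the columns of $T$. The formula then follows by summing over $T$.

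First I will record the combinatorics of deleting row $r$. Since rows of a partition are weakly decreasing, deleting row $r$ leaves a partition $\lambda^{(r)}$ whose rows are the remaining rows of $\lambda$ in their original order. Each remaining row keeps its length and its required content $[\lambda_x]$. So if $S$ is a Latin tableau of shape $\lambda$ and type $\lambda'$, deleting row $r$ gives a filling $T$ of $\lambda^{(r)}$ in which row $x$ is a permutation of $[\lambda_x]$ and no column repeats. Thus $T \in \mathcal{LT}(\lambda^{(r)})$, since the type condition is exactly "row $x$ contains $1,\dots,\lambda_x$". The deleted row is a word $\pi_1\cdots\pi_{\lambda_r}$ that is a permutation of $[\lambda_r]$. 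The column condition says $\pi_i$ does not occur in column $i$ of $T$, for each $i\le \lambda_r$.

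Conversely, take $T\in\mathcal{LT}(\lambda^{(r)})$ and a permutation $\pi$ of $[\lambda_r]$ with $\pi_i\notin$ column $i$ of $T$ for all $i$. Reinserting $\pi$ as row $r$ gives a diagram of shape $\lambda$, and it is a Latin tableau of type $\lambda'$. Rows are correct by construction. Columns $i\le\lambda_r$ have no repeats by the avoidance condition, and columns $i>\lambda_r$ are unchanged. The two maps are mutually inverse, so
\[LT(\lambda)=\sum_{T\in\mathcal{LT}(\lambda^{(r)})} \#\{\pi\in\mathfrak{S}_{\lambda_r} : \pi_i\notin \text{column } i \text{ of } T \ \forall i\}.\]

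The final step identifies the inner count with $per(A_{\lambda_r}(T))$. By definition $a_{ij}=1$ exactly when $j$ is absent from column $i$ of $T$, and only $i,j\in[\lambda_r]$ are relevant. So $\prod_i a_{i,\pi(i)}$ equals $1$ precisely for admissible $\pi$ and $0$ otherwise, and summing over $\pi\in\mathfrak{S}_{\lambda_r}$ gives the permanent.

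I do not expect a real obstacle. The only point needing care is that deleting a row of a partition yields a genuine partition with the correct type, and that reinsertion at position $r$ respects the ordering. Both hold because lengths are weakly decreasing and each row's content depends only on its own length. A minor edge case is $\ell(\lambda)=1$, where $\lambda^{(r)}$ is empty and $A$ is the all-ones matrix, giving $\lambda_1!$ as expected.
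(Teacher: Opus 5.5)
Your proposal is correct and follows the same argument as the paper: delete row $r$ to get a tableau in $\mathcal{LT}(\lambda^{(r)})$, then observe that the rows which can be reinserted at position $r$ without conflict are exactly the permutations counted by $per(A_{\lambda_r}(T))$. You give more detail than the paper's two-line proof, namely the explicit inverse map, the check on columns beyond $\lambda_r$, and the empty-shape edge case.
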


\begin{proof} Every Latin tableau $T$ of shape $\lambda$ corresponds to a Latin tableau $T'$ of shape $\lambda^{(r)}$ with an additional row of size $\lambda_r$ that causes no conflicts inserted at position $r$. The permanent of the availability matrix counts exactly the number of such rows that may be inserted at position $r$ for each $T'$.
\end{proof}

\begin{proposition} If $\lambda$ and $\mu$ are wide partitions where $\lambda = (\mu_1+1, \mu_2, \ldots, \mu_{\ell})$, then $LT(\lambda) \geq LT(\mu)$. 
\end{proposition}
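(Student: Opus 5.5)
The plan is to apply the recursive lemma just proved with $r=1$ to both $\lambda$ and $\mu$, and then compare the two sums term by term. Deleting the first row gives the same partition in both cases: $\lambda^{(1)} = \mu^{(1)} = (\mu_2,\ldots,\mu_\ell)$. So
\[LT(\lambda) = \sum_{T \in \mathcal{LT}(\mu^{(1)})} per(A_{\mu_1+1}(T)), \qquad LT(\mu) = \sum_{T \in \mathcal{LT}(\mu^{(1)})} per(A_{\mu_1}(T)),\]
with both sums over the same index set. It therefore suffices to show that $per(A_{\mu_1+1}(T)) \geq per(A_{\mu_1}(T))$ for every fixed $T$.

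Fix $T$ and write $m=\mu_1$. I would determine the block structure of the $(m+1)\times(m+1)$ availability matrix $A_{m+1}(T)$ from two observations.

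\begin{itemize}
\item The tableau $T$ has first row of length $\mu_2 \leq m$, so column $m+1$ of $T$ is empty. Hence row $m+1$ of $A_{m+1}(T)$ is all ones.
\item Every entry of $T$ is at most $\mu_2 \leq m$, so the value $m+1$ never occurs in $T$. Hence column $m+1$ of $A_{m+1}(T)$ is all ones.
\end{itemize}

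The upper-left $m\times m$ block records exactly whether values $j\le m$ appear in columns $i\le m$ of $T$, so it equals $A_m(T)$. Thus
\[A_{m+1}(T) = \begin{pmatrix} A_m(T) & \mathbf{1} \\ \mathbf{1}^{T} & 1 \end{pmatrix}.\]

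Next I would expand the permanent of this matrix over permutations $\sigma\in\mathfrak{S}_{m+1}$. The terms with $\sigma(m+1)=m+1$ sum exactly to $1\cdot per(A_m(T))$. All other terms are nonnegative, since the matrix is a $(0,1)$-matrix. This gives $per(A_{m+1}(T))\ge per(A_m(T))$, and summing over $T$ gives $LT(\lambda)\ge LT(\mu)$.

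Combinatorially, this is the injection that appends the entry $\mu_1+1$ to the end of the first row of any Latin tableau of shape $\mu$. I could present that as an alternative proof in one line.

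I do not expect a real obstacle. The only point needing care is matching the indexing convention of the availability matrix (row $i$ is column $i$ of $T$, column $j$ is value $j$), so that the upper-left block is genuinely $A_m(T)$ for the same $T$. The wideness hypothesis is not needed for the inequality itself.
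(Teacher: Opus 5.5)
Your proposal is correct and follows the paper's proof essentially step for step: apply the row-deletion permanent lemma with $r=1$, note $\lambda^{(1)}=\mu^{(1)}$, observe that $A_{\mu_1+1}(T)$ has $A_{\mu_1}(T)$ as its upper-left block with an all-ones last row and column, and extend each permutation by fixing $\mu_1+1$. Your side remarks are also correct: the argument amounts to the injection that appends $\mu_1+1$ to the first row, and the wideness hypothesis is not needed.
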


\begin{proof}
    Observe that $\lambda^{(1)} = \mu^{(1)}$ and so the set $\mathcal{LT}(\lambda^{(1)}) = \mathcal{LT}(\mu^{(1)})$. 
    Consider some Latin tableau $T\in \mathcal{LT}(\mu^{(1)})$ with the availability matrix $A_{\mu_1}(T)$ of size $\mu_1 \times \mu_1$.  The same Latin tableau $T\in \mathcal{LT}(\lambda^{(1)})$ will have an availability matrix $A_{\mu_1+1}(T)$ of size $\mu_1 + 1 \times \mu_1 + 1$ that matches $A_{\mu_1}(T)$ on all coordinates $(i,j)$ where $i,j \leq \mu_1$. Since the top row of size $\mu_1+1$ must be strictly longer, the entry $\mu_1+1$ does not occur in any column of $T$ and so the entry at $(i,\mu_1+1)$ is $1$ for all $i \leq \mu_1+1$. Similarly, the $\mu_1+1$ column of $T$ is empty so no value is contained in it. As such, the entry at $(\mu_1+1, j)$ is equal to $1$ for all $j \leq \mu_1+1$.  
    Each permutation $\sigma$ in the calculation of the permanent of $A_{\mu_1}(T)$ can be paired with the permutation $\sigma'=(\sigma_1, \ldots, \sigma_{\mu_1}, \mu_1+1)$ in the calculation of the permanent of $A_{\mu_1+1}(T)$ . Since the entry at $(\mu_1+1, \mu_1+1)=1$, if the term corresponding to $\sigma$ contributes 1 to the permanent of $A_{\mu_1}(T)$, the term corresponding to $\sigma'$ contributes 1 to the permanent of $A_{\mu_1+1}(T)$. It follows that $LT(\lambda) \geq LT(\mu)$. \end{proof}

Similar operations on partitions do not always yield an increase in the number of Latin Tableaux:
\begin{enumerate}
    \item adding a box to the bottom row (for example, $LT((4,4,2,2)) = 8 < 12 = LT((4,4,2,1))$),
    \item adding a top row of equal length (for example, $LT((3,3,2)) = 4 < 6 = LT((3,2))$), and
    \item adding a strictly longer top row (for example, $LT((5,4,4,3,1)) = 32 < 40 = LT((4,4,3,1))$).
\end{enumerate}

One may also suspect that the number of Latin tableaux for wide partitions always weakly increases moving up in the subposet formed by dominance order on wide partitions, but this holds for $n \leq 16$ and fails at $n=17$. At $n=17$, we see that $(6,5,3,2,1) \preceq (7,4,3,2,1)$, but $LT((6,5,3,2,1)) = 584 > 486 = LT((7,4,3,2,1))$.

\section{Open problems}\label{sec:open}
The following two conjectures have been verified computationally for partitions of size $n \leq 22$.

\begin{conjecture}\label{conj:box}
If $\lambda$ and $\mu$ are wide partitions where $\lambda_i = \mu_i + 1$ for some $i\in [\ell(\lambda)-1]$ and $\lambda_j=\mu_j$ when $j \not=i$, then $LT(\lambda) \geq LT(\mu)$.
\end{conjecture}

The partition $ \lambda = (6,5,5,3,3,1)$ is an example of a new partition that would be covered by Conjecture \ref{conj:box}. The partition $\mu=(6,5,5,3,2,1)$ is Latin by Theorem \ref{thm:wide-strict-rep}, so $\lambda$ would also be Latin, as it is obtained by adding a box to row $5$ of $\mu$. Every wide partition up through $n \leq 27$ would be covered, with the exception of $(6,5,4,3,3)$. The following similar conjecture is of interest and verified computationally through $n \leq 22$, though it is less effective for proving Latinness. 

\begin{conjecture}
If $\lambda = (1)^k + \mu$ with $\mu_1' \leq k \leq \mu_1'+1$, then $LT(\lambda)\geq LT(\mu)$.
\end{conjecture}

The smallest wide partition not covered by current results is $\lambda=(6,5,4,3,3)$. There are 576 Latin tableaux of shape $\lambda$ and type $\lambda'$, one being: \vspace{1mm}
\[
\begin{ytableau}
    6&5&4&3&2&1\\
    4&1&5&2&3\\
    2&4&3&1\\
    1&3&2\\
    3&2&1
\end{ytableau}\] \vspace{2mm}

Conjecture \ref{conj:box} would not prove that $\lambda$ is Latin as none of the partitions $(5,5,4,3,3)$, $(6,4,4,3,3)$, or $(6,5,3,3,3)$ are wide.  Thus, a new technique is needed. All fillings of $\lambda$ have $4$, $5$, and $6$ in the first three cells of the top row, while $1$, $2$, and $3$ are always in the last three cells of the top row. Nothing in the class of partitions of the form $(k,k-1, \ldots, 5,4,3,3)$ for $k \geq 6$ is known to be Latin by any result, but Latin tableaux on these shapes appear to be countable by a simple formula.

\begin{conjecture}
    If $\lambda = (k,k-1, \ldots, 5,4,3,3)$ with $k\geq 5$, then $LT(\lambda)=9\cdot4^{k-3}$.
\end{conjecture}

Similar shapes that are not currently proven to be Latin by any results include $(8,7,5,4,4,4)$, $(8,6,5,5,4,4)$, $(7,6,6,5,4,4)$, $(7,7,6,4,4,2,2)$, $(7,7,5,5,4,2,2)$, and $(7,7,6,5,3,3,2)$. Understanding these shapes appears to require different techniques, and so they constitute natural next test cases for the wide partition conjecture on free matroids.


\subsection*{AI Declaration} Claude Opus 5.5 was used to refine conjectures, generate and analyze data, review the paper, and, in some cases, assist with proofs. 
The authors take full responsibility for the correctness and integrity of all mathematical results, proofs, and text.

\printbibliography

\end{document}